\definecolor{dkgreen}{rgb}{0,0.6,0}
\definecolor{gray}{rgb}{0.5,0.5,0.5}
\definecolor{mauve}{rgb}{0.58,0,0.82}
\tiny\color{gray},
\newcommand{\Ecal}{\mathcal{E}}
\newcommand{\Pcal}{\mathcal{P}}
\newcommand{\Lcal}{\mathcal{L}}
\newcommand{\Wcal}{\mathcal{W}}
\renewcommand{\d}{\ensuremath{\mathrm{d}}}
\newcommand{\e}{ \operatorname{\mathbb E}}
\newcommand{\be}{b^{\tt{E}}}
\newcommand{\bi}{b^{\tt{I}}}
\newcommand{\Li}{L^{\tt{I}}}
\newcommand{\Real}{\mathbb{R}}
\newcommand{\ZZ}{\mathbb{Z}}
\newcommand{\abs}[1]{\left\vert#1\right\vert}
\newcommand{\norm}[1]{\left\|#1\right\|}
\newcommand{\ie}{{\it{i.e.}}}
\newcommand{\transpose}{\mathsf{T}} 
\newtheorem{re}{Remark}[section]
\newtheorem{lemma}{Lemma}[section]
\newtheorem{theo}{Theorem}[section]
\newtheorem{assum}{Assumption}[section]
\newtheorem{prop}{Proposition}[section]
\numberwithin{equation}{section}
\title{Empirical approximation to invariant measures of mean-field Langevin dynamics}
\author{Wenjing Cao\thanks{School of Mathematical Sciences, 
		Fudan University, 
		Shanghai 200433, China (email: {\tt wjcao22@m.fudan.edu.cn}). The author's research is supported
        by the National Key R{\&}D Program of China (No.~2022ZD0116401)} 
        \and 
        Kai Du\thanks{SCMS, Fudan University, Shanghai, China;
    Shanghai Artificial Intelligence Laboratory
    (email: {\tt kdu@fudan.edu.cn}). 
    The author's research is supported
    by the National Natural Science Foundation of China (No.~12222103),
    and by the National Key R{\&}D Program of China (No.~2022ZD0116401)
	}}
\date{\today}
\begin{document}
\maketitle
\begin{abstract}
    This paper is concerned with the approximation to invariant measures for Langevin dynamics of McKean--Vlasov type. Under dissipativity and Lipschitz conditions, we prove that the empirical measures of both the mean-field and self-interacting Langevin dynamics converge to the invariant measure in the Wasserstein distance. Numerical experiments are conducted to illustrate theoretical results.
\end{abstract}
\noindent{\textit{Keywords:}} Langevin dynamics; McKean--Vlasov processes; empirical measures; invariant measures; reflection coupling
\section{Introduction}
    In this paper, we focus on the approximation to the invariant measure of the Langevin dynamics $(X_t,V_t)$ on $\Real^{2d}$, governed by the following stochastic differential equation
    \begin{equation}\label{sde1}
        \left\{\begin{aligned}
            \d X_t&=V_t\,\d t,\\
            \d V_t&=\Big(\be(X_t)+\int_{\Real^d}\bi(X_t,x)\Lcal(X_t)(\d x)-\gamma V_t\Big)\d t+\sqrt{2\gamma}\,\d B_t.
        \end{aligned}\right.
    \end{equation}
    Here $(B_t)_{t\geq0}$ is a standard Brownian motion on $\Real^d$, and $\Lcal(X_t)$ denotes the distribution law of $X_t$. The well-posedness of \eqref{sde1} holds if coefficients $\be:\Real^d\to\Real^d$ and $\bi:\Real^d\times\Real^d\to\Real^d$ are Lipschitz continuous and the initial value of the solution has finite second moment (cf.~\cite{graham1996asymptotic}). 
    In physics, $X_t$ and $V_t$ represent the position and velocity vector of particles, respectively, governed by the external force $\be$ and the interaction force $\bi$ along with damping $\gamma$. Langevin dynamics plays a crucial role in statistical mechanics (cf.~\cite{brunger1984stochastic,schneider1978molecular}) and has broad application in machine learning (cf.~\cite{gess2024stochastic,kazeykina2020ergodicity,mei2018mean}). 
    The model of \eqref{sde1} is known as the Langevin diffusion of McKean--Vlasov type, which characterizes the mean-field limit of interacting particle systems (cf.~\cite{kac1956foundations,mckean1966class,mckean1967propagation}).
 
    The computation of invariant probability measures for McKean--Vlasov diffusions is a topic of wide interest and has been extensively explored in the literature. 
    In the context of classical Langevin dynamics with gradient-type forces, convergence rates for distribution laws $\Lcal(X_t,V_t)$ towards invariant measures have been obtained with respect to various metrics, including Sobolev norms (cf. \cite{villani2009hypocoercivity,dolbeault2009hypocoercivity,dolbeault2015hypocoercivity}), $L^2$ distances (cf.~\cite{cao2023explicit}), and Wasserstein metrics (cf.~\cite{10.1214/18-AOP1299}). 
    In mean-field settings, where drift terms are dependent on distribution, contraction and propagation of chaos are studied for equations with Brownian noise (cf.~\cite{schuh2022global,kazeykina2020ergodicity,guillin2021kinetic}) or general L{\'e}vy noise (cf.~\cite{liu2024exponential}). 
    To the best of our knowledge, no existing works so far have delved into the empirical approximation to invariant measures for Langevin dynamics of McKean--Vlasov type with general external and interaction forces. This paper is aimed to fill the gap.

    Our work builds upon the basis of \cite{du2023empirical}, where an efficient and space-saving algorithm was put forward to approximate the invariant measure of first-order McKean--Vlasov dynamics. The authors argued that empirical measures, namely, $\Ecal_t(X)=\frac{1}{t}\int_0^t\delta_{X_s}\,\d s$ for a stochastic process $X$, of both the McKean--Vlasov process and its associated path-dependent process can converge to the invariant measure. 
    However, results in \cite{du2023empirical} necessitate a strong monotonicity condition (cf.~\cite[Assumption 1.1]{du2023empirical}), rendering them inapplicable to such second-order equations as \eqref{sde1}. On the other hand, it is proved in \cite{schuh2022global} that under specific Lipschitz and dissipativity conditions, there exists a unique invariant measure for Langevin dynamics of McKean--Vlasov type. 
    Motivated by these findings, we are interested in the empirical ergodicity of mean-field Langevin diffusions, under the same assumptions as those in \cite{schuh2022global}. 
    Our objective is to show that empirical ergodicity is an inherent characteristic of underdamped Langevin dynamics with Lipschitz continuous and dissipative coefficients, without the need for any supplementary constraints.    
    
    In particular, we strive to approximate the invariant measure $\mu^*$ of \eqref{sde1} by empirical measures
    \[\Ecal_t(X,V):=\int_{0}^{1}\delta_{(X_{ts},V_{ts})}\,\d s=\frac{1}{t}\int_0^t\delta_{(X_s,V_s)}\,\d s\]
    of the mean-field Langevin dynamics and its self-interacting counterpart
    \begin{equation}\label{sde3}
        \left\{\begin{aligned}
            \d Z_t&=W_t\,\d t,\\
            \d W_t&=\Big(\be(Z_t)+\int_0^1\bi(Z_t,Z_{ts})\d s-\gamma W_t\Big)\d t+\sqrt{2\gamma}\,\d B_t,
        \end{aligned}\right.
    \end{equation}
    where the distribution dependence on $\Lcal(X_t)$ is replaced by the path dependence on $\Ecal_t(Z)$. 
    By imposing Lipschitz continuity and dissipativity on $\be$ and $\bi$ (see Assumption \ref{dissi}), we obtain the convergence rates of empirical measures characterized by upper bound estimates for the $1$-Wasserstein distance (see Theorems \ref{MeanFieldConvergence} and \ref{SelfInteractingConvergence}). 
    Note that such dynamics as \eqref{sde3} are also referred to as self-interacting diffusions, which are traced back to~\cite{benaim2002self,cranston1995self,raimond1997self,10.1214/EJP.v17-2121}. 
    Some innovative investigation into the application of self-interacting diffusions can be found in a recent paper~\cite{du2023self}. 

    Following the ideas in \cite{cao2023empirical}, we compare \eqref{sde1} and~\eqref{sde3} with the Markovian equation derived from them:
    \begin{equation}\label{sde2}
        \left\{\begin{aligned}
            \d Y_t&=U_t\,\d t,\\
            \d U_t&=\Big(\be(Y_t)+\int_{\Real^d}\bi(Y_t,x)\mu_X^*(\d x)-\gamma U_t\Big)\d t+\sqrt{2\gamma}\,\d B_t,
        \end{aligned}\right.
    \end{equation}
    where $\mu_X^*$ denotes the marginal distribution of $\mu^*$ on $\Real^d$, \ie~$\mu_X^*(\cdot)=\mu^*(\cdot\times\Real^d)$. 
    Empirical ergodicity of \eqref{sde2} is guaranteed by its exponential contractivity (see Theorem~\ref{MarkovConvergence}). In contrast to first-order scenarios in~\cite{cao2023empirical,du2023empirical}, the degenerate nature of Langevin diffusions adds to complexity and challenges in research. 
    Hence, we adopt the reflection coupling technique (cf.~\cite{eberle2016reflection}) as well as the construction for metrics in~\cite{10.1214/18-AOP1299,schuh2022global} to estimate the error.       

    The remainder of our paper is arranged as follows. 
    Section~\ref{results} states the main results. Proofs of Theorems~\ref{MeanFieldConvergence} and \ref{SelfInteractingConvergence} are given in Section~\ref{proofs}. 
    Section~\ref{appendix} involves the proof of Theorem~\ref{MarkovConvergence}, which lays a solid foundation for main theorems in Section~\ref{results}. 
    In Section~\ref{experiment}, numerical results are demonstrated.

    We end the introduction part by clarifying notations. The Euclidean norm and the inner product in $\Real^d$ are denoted by $\abs{\cdot}$ and $\langle\cdot,\cdot\rangle$ respectively. For any mapping $f:\Real^n\to\Real^m$, $\abs{f}_1$ denotes its Lipschitz constant, \ie $$\abs{f}_1:=\sup\limits_{x\neq y}\frac{\abs{f(x)-f(y)}}{\abs{x-y}}.$$
    Let $\Pcal(\Real^d)$ be the space of all Borel probability measures on $\Real^d$, and the $1$-Wasserstein distance between any $\mu,\,\nu\in\Pcal(\Real^d)$ is defined as 
	 $$
	 \Wcal_1(\mu,\nu):=\inf\big\{\e[\abs{X-Y}]:\, \Lcal(X)=\mu,\, \Lcal(Y)=\nu\big\}.$$
    For any $\mu\in\Pcal(\Real^d)$, we define
    \[\norm{\mu}_1:=\int_{\Real^d}\abs{x}\mu(\d x)\]
    and
    \[\Pcal_1(\Real^d):=\big\{\mu\in\Pcal(\Real^d):\, \norm{\mu}_1<\infty\big\}.\] 
    Note that $(\Pcal_1(\Real^d),\Wcal_1)$ is a complete metric space (cf. \cite{villani2009optimal}).

\section{Main results}\label{results}
    The following assumption implies the Lipschitz property as well as dissipativity of $\be(\cdot)$ and $\bi(\cdot)$, which coincides with \cite[Assumptions 2, 3]{schuh2022global} to establish the global contractivity for the Langevin dynamics there.
    \begin{assum}\label{dissi}
        For the coefficients $\be:\Real^d\to\Real^d$ and $\bi:\Real^{2d}\to\Real^d$, we assume that
        
        \emph{(i)} $\vert\bi\vert_1=:\Li <\infty $;
        
        \emph{(ii)} there exists a positive definitive symmetric matrix $K\in\Real^{d\times d}$ with the smallest eigenvalue $\kappa$ and the largest eigenvalue $L_K$ such that
        \[\be(x)=-Kx+g(x),\,\forall x\in\Real^d,\]
        where $g:\Real^d\to\Real^d$ is a Lipschitz function with $\abs{g}_1=:L_g<\infty$, and satisfies 
        \[\langle g(x)-g(x^{\prime}),x-x^{\prime}\rangle\leq 0,\,\forall x,x^{\prime}\in\Real^d\ \text{\rm{such that}}\ \abs{x-x^{\prime}}\geq R\]
        with some constant $R\in[0,\infty)$.
    \end{assum}

    The main results of this paper consist of the following two theorems. 
    For the mean-field Langevin dynamics~\eqref{sde1}, we obtain the empirical ergodicity property.
    \begin{theo}\label{MeanFieldConvergence}
        Suppose Assumption~\ref{dissi} holds, with $L_g$ and $\Li$ satisfying 
        \begin{equation}\label{ineq01}
            L_g<\gamma\sqrt{{\kappa}/{2}}.
        \end{equation} 
        and
        \begin{equation}\label{ineq02}
            \Li\leq \mathbf{C}_0\min\Big\{\frac{\gamma\tau}{12}\sqrt{2\kappa}\min\{1,2(L_K+L_g)\gamma^{-2}\},\frac{L_K+L_g}{4}\Big\}.
        \end{equation}
        Here
        \begin{equation}\label{tau}
            \tau:=\min\{1/8,\gamma^{-2}\kappa/2-\gamma^{-4}L_g^2\},
        \end{equation}
        and
        \begin{equation}\label{C0}
            \mathbf{C}_0:=\exp\Big(-\frac{L_K+L_g}{4}R_1^2\Big)\Bigg(1+\Big(1-\exp\Big(-\frac{L_K+L_g}{4}R_1^2\Big)\Big)\kappa(L_K+L_g)^{-1}\Bigg)^{-1}
        \end{equation}
        with $R_1$ given by~\eqref{R1} in Section~\ref{preliminaries}. Let $\mu^*$ be the invariant probability measure of~\eqref{sde1}.
        Then for any solution $(X_t,V_t)$ of~\eqref{sde1} whose initial value satisfies   
        \[\e[\abs{X_0}^2+\abs{V_0}^2]<\infty,\]
        there exists a positive constant $C$, independent of $t$, such that
        \[\e[W_1(\Ecal_t(X,V),\mu^*)]\leq C t^{-\varepsilon}\]
        for any $0<\varepsilon<\min\{\frac{1}{4},\frac{1}{2d}\}$.
    \end{theo}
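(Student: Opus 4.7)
The plan is to compare \eqref{sde1} with the Markovian surrogate \eqref{sde2}, whose empirical measure converges to $\mu^*$ at rate $t^{-\varepsilon}$ by Theorem~\ref{MarkovConvergence}. I would fix a solution $(Y_t,U_t)$ of \eqref{sde2} with initial law $\mu^*$, so that $\mathcal{L}(Y_t,U_t)\equiv\mu^*$, and realize it on the same probability space as $(X_t,V_t)$ through the reflection coupling on the velocity noise developed in \cite{10.1214/18-AOP1299, schuh2022global}. The triangle inequality
\[
\e[\Wcal_1(\Ecal_t(X,V),\mu^*)] \leq \e[\Wcal_1(\Ecal_t(X,V),\Ecal_t(Y,U))] + \e[\Wcal_1(\Ecal_t(Y,U),\mu^*)]
\]
reduces the problem to two tasks: bounding the first term via a pathwise coupling estimate, and applying Theorem~\ref{MarkovConvergence} to the second.

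For the coupling estimate, Kantorovich duality gives
\[
\Wcal_1(\Ecal_t(X,V),\Ecal_t(Y,U)) \leq \tfrac{1}{t}\int_0^t\big(\abs{X_s-Y_s}+\abs{V_s-U_s}\big)\,\mathrm{d}s,
\]
so I would establish exponential decay of $\e[\abs{X_t-Y_t}+\abs{V_t-U_t}]$. Adopting the weighted concave semi-metric $\rho(x,v)=f(\abs{x}+\alpha\abs{v})$ of \cite{schuh2022global}, with the scaling parameters $R_1$, $\tau$ and $\mathbf{C}_0$ tuned as in \eqref{tau}--\eqref{C0}, I would apply It\^{o}'s formula to $\rho_t:=\rho((X_t,V_t)-(Y_t,U_t))$ along the coupled dynamics. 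The Markovian part of the generator contributes a contraction of the form $-c\,\e[\rho_t]$ with $c>0$ guaranteed by \eqref{ineq01}. The mean-field perturbation produces an additional term bounded by $\Li$ times $\e\abs{X_t-Y_t}+\Wcal_1(\Lcal(X_t),\mu_X^*)$; since $\Lcal(Y_t)=\mu_X^*$, both summands are dominated by $\mathbf{C}_0^{-1}\e[\rho_t]$, and \eqref{ineq02} is precisely what ensures this perturbation is absorbed by the contraction. A Gr\"{o}nwall argument then yields $\e[\rho_t]\leq C e^{-\lambda t}$, and therefore $\e[\abs{X_t-Y_t}+\abs{V_t-U_t}]\leq Ce^{-\lambda t}$; integrating produces an $O(1/t)$ bound on the first triangle term, which is negligible compared to $t^{-\varepsilon}$ for any admissible $\varepsilon<1/4$.

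The principal difficulty lies in the contraction step. Because the Langevin diffusion is hypoelliptic---noise acts only on velocities---a naive synchronous coupling fails to contract position differences, so one must reproduce the non-trivial twist $\abs{x}+\alpha\abs{v}$ together with the concave distortion $f$ from \cite{schuh2022global}, then track the McKean--Vlasov perturbation through the It\^{o} calculus sharply enough that the explicit constants \eqref{tau}--\eqref{C0} and smallness conditions \eqref{ineq01}--\eqref{ineq02} really do deliver a net exponential decay. The finite second-moment assumption on $(X_0,V_0)$ enters only to guarantee $\e[\rho_0]<\infty$ and to propagate moment bounds used when comparing $\rho$ with the Euclidean distance; it plays no role in the contraction itself.
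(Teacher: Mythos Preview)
Your overall architecture---the triangle inequality splitting through $\Ecal_t(Y,U)$, a reflection coupling between \eqref{sde1} and the Markovian surrogate \eqref{sde2}, and Theorem~\ref{MarkovConvergence} for the second piece---matches the paper's. Two points merit comment, one a gap and one a genuine alternative.

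The semi-metric you write, $\rho(x,v)=f(|x|+\alpha|v|)$, is \emph{not} the one in \cite{schuh2022global} and would not contract. Since $\d F_t=G_t\,\d t$, differentiating $|F_t|$ yields only $\langle F_t/|F_t|,G_t\rangle$, which has no sign, while the $|F_t|$ contribution entering through $\d|G_t|$ via $\be$ is positive; there is no mechanism to produce a negative drift in the position gap. The correct twist is $r_s(x,v)=\alpha|x|+|x+\gamma^{-1}v|$: with $H_t=F_t+\gamma^{-1}G_t$ one has $\d F_t=\gamma(H_t-F_t)\,\d t$, hence $\d|F_t|\le(-\gamma|F_t|+\gamma|H_t|)\,\d t$, and the choice $\alpha=2(L_K+L_g)\gamma^{-2}$ yields a net $-\tfrac12\alpha\gamma|F_t|$. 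Moreover the paper does not work with $f(r_s)$ alone but with the two-regime function $\rho(x,v)=f\big(\Delta(x,v)\wedge D(\tilde R)+\epsilon_0 r_l(x,v)\big)$ of \eqref{rho}, switching between $r_s$ and the quadratic Lyapunov norm $r_l$; Lemmas~\ref{lemma_case1} and~\ref{lemma_case2} treat the large- and small-distance regimes separately (dissipativity versus reflection-plus-concavity). A single-regime $f(r_s)$ would fail at large separations when $R>0$.

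Your choice to start $(Y_0,U_0)\sim\mu^*$ is, by contrast, a valid and slightly cleaner variant. The paper instead takes $(Y_0,U_0)=(X_0,V_0)$, so that $\rho(0)=0$, but then must treat the forcing term $\Wcal_1(\Lcal(X_t),\mu_X^*)$ exogenously, invoking \cite[Theorem~12]{schuh2022global} for $\Wcal_1(\Lcal(X_t,V_t),\mu^*)\le Ce^{-ct}$ before applying Gr\"onwall (see \eqref{ODI}). In your setup $\Lcal(Y_t)\equiv\mu_X^*$, so $\Wcal_1(\Lcal(X_t),\mu_X^*)\le\e|F_t|$ and the entire interaction perturbation is absorbed by the negative $\e|F_t|$ term already present in the differential inequality; condition \eqref{ineq02}, via \eqref{ineq03}, is exactly sharp enough for this. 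Your route thus avoids the external contraction citation at the cost of a nonzero $\e[\rho(0)]$, which is harmless under the finite-second-moment hypothesis.
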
     
    Under the same setting, we prove that the empirical measure of the self-interacting Langevin dynamics~\eqref{sde3} also converges to the invariant measure $\mu^*$ for~\eqref{sde1}.
    \begin{theo}\label{SelfInteractingConvergence}
        Suppose Assumption~\ref{dissi}, \eqref{ineq01} and~\eqref{ineq02} hold. Then for any solution $(Z_t,W_t)$ of~\eqref{sde3} whose initial value satisfies 
        \[\e[\abs{Z_0}^2+\abs{W_0}^2]<\infty,\]
        there exists a positive constant $C$, independent of $t$, such that
        \[\e[W_1(\Ecal_t(Z,W),\mu^*)]\leq C t^{-\varepsilon}\]
        for any $0<\varepsilon<\min\{\frac{1}{4},\frac{1}{2d},\frac{\mathbf{C}_3}{\mathbf{C}_3+\gamma^{-1}\Li/2}\}$, where the value of $\mathbf{C}_3=\mathbf{C}_3(L_K,L_g,\kappa,\gamma)>0$ is given by~\eqref{C3} in Section \ref{SelfInteracting}. 
    \end{theo}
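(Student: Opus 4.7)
The plan is to compare the self-interacting system \eqref{sde3} with the Markovian proxy \eqref{sde2} via synchronous coupling, then close a Volterra-type feedback loop produced by the path-dependence. Let $(Y_t,U_t)$ solve \eqref{sde2} driven by the \emph{same} Brownian motion $B_t$ as $(Z_t,W_t)$ and with the same initial condition. The instantaneous drift discrepancy in the velocity equation is
\[
\Delta_t := \int_0^1 \bi(Z_t,Z_{ts})\,\d s - \int_{\Real^d}\bi(Y_t,x)\,\mu_X^*(\d x),
\]
which by Assumption~\ref{dissi}(i) and the definition of $\Wcal_1$ is bounded by
$\abs{\Delta_t}\leq \Li\abs{Z_t-Y_t}+\Li\,\Wcal_1(\Ecal_t(Z),\mu_X^*)$.

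Next, I would apply the semi-metric $\rho$ from Section~\ref{preliminaries} (the one built from the reflection-coupling construction of \cite{schuh2022global,10.1214/18-AOP1299}) that yields exponential contraction along~\eqref{sde2}. Writing $\rho_t := \rho((Z_t,W_t),(Y_t,U_t))$, the same Itô computation that proves contractivity for \eqref{sde2} now produces an extra first-order term from $\Delta_t$, giving an inequality of the shape
\[
\frac{\d}{\d t}\e[\rho_t]
\leq -\mathbf{C}_3\,\e[\rho_t]
+ \tfrac{1}{2}\gamma^{-1}\Li\,\e\!\left[\abs{Z_t-Y_t}+\Wcal_1(\Ecal_t(Z),\mu_X^*)\right].
\]
Since $\rho$ is comparable to $\abs{Z_t-Y_t}+\alpha\abs{W_t-U_t}$ on compacts (by construction of $f$ in the semi-metric), the term $\e\abs{Z_t-Y_t}$ is absorbed into $\e[\rho_t]$ provided $\Li$ is small enough, which is exactly \eqref{ineq02}. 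For the remaining Wasserstein term, triangle inequality gives
\[
\Wcal_1(\Ecal_t(Z),\mu_X^*)
\leq \Wcal_1(\Ecal_t(Z),\Ecal_t(Y)) + \Wcal_1(\Ecal_t(Y),\mu_X^*)
\leq \frac{1}{t}\int_0^t\abs{Z_s-Y_s}\,\d s + \Wcal_1(\Ecal_t(Y),\mu_X^*).
\]
By Theorem~\ref{MarkovConvergence}, the last term has expectation $O(t^{-\varepsilon})$ for any admissible $\varepsilon<\min\{1/4,1/(2d)\}$.

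Combining the above, $\varphi(t):=\e[\rho_t]$ satisfies a Grönwall–Volterra inequality of the form
\[
\varphi(t)\leq C_1 t^{1-\varepsilon}
- \mathbf{C}_3\int_0^t\varphi(s)\,\d s
+ \tfrac{1}{2}\gamma^{-1}\Li\int_0^t \frac{1}{s}\!\int_0^s\varphi(r)\,\d r\,\d s,
\]
after integrating in time and estimating the double integral with Fubini. A standard comparison with power functions $t^{-\varepsilon'}$ (or equivalently a fixed-point argument in the space of functions with prescribed polynomial decay) produces the decay rate $\varphi(t)\leq C t^{1-\varepsilon'}$ with $\varepsilon'=\mathbf{C}_3/(\mathbf{C}_3+\gamma^{-1}\Li/2)$, which matches the third term in the exponent restriction of the theorem. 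Dividing by $t$ converts the pathwise bound into an $\Ecal_t$ bound: $\e\Wcal_1(\Ecal_t(Z,W),\Ecal_t(Y,U))\leq t^{-1}\int_0^t \e[\rho_s]\d s\lesssim t^{-\varepsilon'}$. One final application of the triangle inequality together with Theorem~\ref{MarkovConvergence} applied to $(Y,U)$ yields the claim.

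The main obstacle is the self-feedback: the discrepancy $\abs{Z_t-Y_t}$ feeds back into itself through $\Wcal_1(\Ecal_t(Z),\Ecal_t(Y))$, which is a \emph{time-average} of the past discrepancy. This is what forces the algebraic rather than exponential bound in the Volterra inequality, and it is precisely this mechanism that degrades the exponent to $\mathbf{C}_3/(\mathbf{C}_3+\gamma^{-1}\Li/2)$ when $\Li>0$; the smallness condition \eqref{ineq02} on $\Li$ is what guarantees that the Volterra iteration actually converges, while the contractivity constant $\mathbf{C}_3$ (from the semi-metric for \eqref{sde2}) quantifies how quickly each iteration damps the feedback.
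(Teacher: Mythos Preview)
Your overall architecture---compare $(Z,W)$ to the Markovian proxy $(Y,U)$, derive a differential inequality for a contraction distance with an extra term involving $\Wcal_1(\Ecal_t(Z),\mu_X^*)$, split that via the triangle inequality into a path-averaged feedback plus the Markovian empirical error from Theorem~\ref{MarkovConvergence}, and then close the loop---matches the paper's strategy. However, two substantive points separate your sketch from a working proof.

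\textbf{Synchronous coupling is the wrong coupling here.} You propose to drive $(Z,W)$ and $(Y,U)$ by the \emph{same} Brownian motion and then invoke ``the same It\^o computation that proves contractivity for \eqref{sde2}''. But that contractivity (Proposition~\ref{MarkovContraction_prop}) is obtained via \emph{reflection} coupling, not synchronous coupling, and for underdamped Langevin with a non-convex confinement ($R>0$ in Assumption~\ref{dissi}) this distinction is essential. Under synchronous coupling the difference process $(F_t,G_t)$ has no martingale part, so the It\^o correction $4\gamma^{-1}f''(r_s(t))\lambda^2$ is absent; yet this is exactly the term that, through the ODE \eqref{ODE}, converts the positive drift $\alpha\gamma f'(r_s(t))\abs{H_t}$ into the negative $-2\gamma^{-1}r_s(t)$ in the small-distance regime (Lemma~\ref{lemma_case2}). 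Without it you get no contraction when $\abs{F_t}<R$, and the differential inequality you write for $\e[\rho_t]$ simply does not hold. The paper couples $(\hat Z,\hat W)$ and $(\hat Y,\hat U)$ by reflection (equations \eqref{sde3_coupling}--\eqref{sde2.2_coupling}), exactly as in Section~\ref{meanfield}, and this is what produces the $-c_3\e[\rho(t)]$ term.

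\textbf{The feedback is not closed the way you describe.} You absorb $\e\abs{Z_t-Y_t}$ into $\e[\rho_t]$ and then run a Volterra comparison on $\varphi=\e[\rho_t]$ alone. The paper does something sharper: condition \eqref{ineq02} (via \eqref{ineq03}) leaves a \emph{negative} residual $-\tfrac12\gamma^{-1}\Li f'(0)\e[\abs{F_t}]$ in the differential inequality, so that with $h(t)=\e[\rho(t)]$ and $g(t)=f'(0)\e[\abs{F_t}]$ one has
\[
h'(t)\le -c_3 h(t)-\tfrac12\gamma^{-1}\Li\, g(t)+\tfrac12\gamma^{-1}\Li\int_0^1 g(ts)\,\d s+M(1\wedge t^{-\varepsilon}).
\]
After integrating and dividing by $t$, the paper runs a continuity (first-hitting-time) argument on $t^\varepsilon\int_0^1 g(ts)\,\d s$, using $c_3 h\ge \mathbf{C}_3\, g$ from \eqref{metric_equi} and $\int_0^1 s^{-\varepsilon}\,\d s=\tfrac{1}{1-\varepsilon}$; the admissibility condition $\varepsilon<\mathbf{C}_3/(\mathbf{C}_3+\tfrac12\gamma^{-1}\Li)$ is precisely what makes the denominator $\mathbf{C}_3-\tfrac12\gamma^{-1}\Li\tfrac{\varepsilon}{1-\varepsilon}$ positive. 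Your ``standard comparison with power functions'' is not obviously the same computation and, as written, conflates the contraction rate $c_3$ with the derived constant $\mathbf{C}_3=c_3\mathbf{C}_1 f'(0)^{-1}$; the latter enters only after passing from $h$ to $g$. If you repair the coupling and keep the negative $g$-term, the continuity argument goes through and delivers exactly the exponent threshold stated in the theorem.
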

    The theorems are proved in Section~\ref{proofs}.
    Note that a probability measure $\mu^*$ is called invariant for \eqref{sde1} if $P_t^* \mu^*=\mu^*$ for all $t\geq0$, where $(P_t^*)_{t\geq0}$ denotes the nonlinear transition semigroup induced by \eqref{sde1}, \ie~$P_t^*\Lcal(X_0,V_0)=\Lcal(X_t,V_t)$.
    The existence and uniqueness of $\mu^*$ are ensured by~\cite[Theorem 12]{schuh2022global} under the same conditions of the above theorems.
    
    \begin{re}
        Assumption~\ref{dissi} suggests that $\be$ is a globally Lipschitz continuous function such that $\abs{b}_1\leq L_K+L_g$ and that
        \[\langle \be(x)-\be(x^{\prime}),x-x^{\prime}\rangle\leq -\kappa\abs{x-x^{\prime}}^2\]
        for all $\abs{x-x^{\prime}}\geq R$. It also implies the weak interaction condition as~\cite[Assumption 1.1]{du2023sequential}.
    \end{re}
\section{Proofs of Theorems \ref{MeanFieldConvergence} and \ref{SelfInteractingConvergence}}\label{proofs}
\subsection{Preliminaries}\label{preliminaries}  
    As a key step, we acquire the following estimate for \eqref{sde2}, whose proof is postponed to Section~\ref{appendix}. 
    \begin{theo}\label{MarkovConvergence}
        Suppose Assumption~\ref{dissi}, \eqref{ineq01} and \eqref{ineq02} hold. Then for any solution $(Y_t,U_t)$ of~\eqref{sde2} whose initial value satisfies 
        \[\e[\abs{Y_0}^2+\abs{U_0}^2]<\infty,\]
        there exists a positive constant $C$, independent of $t$, such that
        \[\e[W_1(\Ecal_t(Y,U),\mu^*)]\leq C t^{-\varepsilon}\]
        for any $0<\varepsilon<\min\{\frac{1}{4},\frac{1}{2d}\}$.
    \end{theo}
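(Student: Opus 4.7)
The plan is to prove Theorem~\ref{MarkovConvergence} in three stages: establish exponential Wasserstein contractivity of the Markov semigroup associated with~\eqref{sde2} via reflection coupling; use contractivity to control the variance of time averages of Lipschitz test functions; and upgrade the resulting pointwise bound to a $\Wcal_1$ estimate by Kantorovich duality combined with a truncation and covering argument.

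Because $\mu_X^*$ is a fixed probability measure, equation~\eqref{sde2} is a classical Markovian SDE. The first step is to construct a semimetric
\[\rho\big((x,v),(x',v')\big) = f\big(|x - x' + \gamma^{-1}(v - v')| + \alpha|v - v'|\big)\]
on $\Real^{2d}$ with a concave, bounded function $f$ and a constant $\alpha$ tuned via~\eqref{ineq01}, and to implement a reflection coupling on the velocity component (synchronous outside a bounded region). This is precisely the framework of~\cite{10.1214/18-AOP1299,schuh2022global}; under Assumption~\ref{dissi} combined with \eqref{ineq01}--\eqref{ineq02} it yields
\[\Wcal_\rho(P_t\nu_1, P_t\nu_2) \leq e^{-ct}\,\Wcal_\rho(\nu_1, \nu_2),\]
where $(P_t)_{t\geq 0}$ denotes the Markov semigroup of~\eqref{sde2}. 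Since $\rho$ is equivalent to the Euclidean distance up to multiplicative constants, uniqueness of the invariant measure (which must then coincide with $\mu^*$) follows, as does exponential decay of $\Wcal_1(\Lcal(Y_t,U_t),\mu^*)$.

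Next, for any $1$-Lipschitz test function $\varphi$ on $\Real^{2d}$, I would expand the variance of the time average as
\[\Var\!\Big(\tfrac{1}{t}\int_0^t \varphi(Y_s,U_s)\,\d s\Big) = \tfrac{2}{t^2}\int_0^t\!\!\int_s^t \Cov\!\big(\varphi(Y_s,U_s),\varphi(Y_r,U_r)\big)\,\d r\,\d s,\]
and bound the inner covariance by conditioning on $\mathcal{F}_s$ and applying contractivity to estimate $\big|\e[\varphi(Y_r,U_r)\mid \mathcal{F}_s] - \int\varphi\,\d\mu^*\big|$ by a constant multiple of $e^{-c(r-s)}$, after invoking a uniform-in-time second moment bound $\sup_t \e[|Y_t|^2+|U_t|^2] < \infty$ obtained from Assumption~\ref{dissi} via a standard Lyapunov argument. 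The exponential decay integrates to $1/t$, giving $\e\big|\tfrac{1}{t}\int_0^t \varphi(Y_s,U_s)\,\d s - \int\varphi\,\d\mu^*\big| \leq C t^{-1/2}$ uniformly over $1$-Lipschitz $\varphi$.

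Finally, I would invoke Kantorovich duality, $\Wcal_1(\Ecal_t(Y,U),\mu^*) = \sup_{|\varphi|_1\leq 1}\big|\int\varphi\,\d\Ecal_t - \int\varphi\,\d\mu^*\big|$, and restrict the supremum to an $\varepsilon$-net of $1$-Lipschitz functions on a ball $B_r\subset\Real^{2d}$ of radius $r$. The cardinality of such a net grows like $(r/\varepsilon)^{C\cdot 2d}$, so summing the pointwise bounds and optimising $(r,\varepsilon)$ against the tail estimate $\e[|(Y_t,U_t)|\indf_{B_r^c}]\lesssim 1/r$ produces the announced rate $t^{-\varepsilon}$ for $\varepsilon<\min\{1/4,\,1/(2d)\}$, where the $1/(2d)$ comes from the covering exponent and the $1/4$ from balancing the $t^{-1/2}$ pointwise control against the net cardinality. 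The main obstacle I expect is the first step: establishing the $\rho$-contraction for the degenerate Langevin semigroup. Because noise acts only on velocity, one cannot hope for synchronous-coupling contraction in $x$, and the interplay between the auxiliary variable $x-x'+\gamma^{-1}(v-v')$, the velocity difference, and the concave function $f$ chosen to match the quantitative constraints~\eqref{ineq01}--\eqref{ineq02} must be managed with care; the explicit constants $\tau$ and $\mathbf{C}_0$ appearing in Theorem~\ref{MeanFieldConvergence} are precisely the output of this calculation.
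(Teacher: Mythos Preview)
Your proposal correctly identifies the two ingredients the paper actually proves: exponential $\Wcal_1$-contractivity of the Markov semigroup of~\eqref{sde2} via reflection coupling, and a uniform second-moment bound via a Lyapunov argument. The paper's own proof of Theorem~\ref{MarkovConvergence} is in fact nothing more than this: it establishes Propositions~\ref{L2} and~\ref{MarkovContraction_prop} and then invokes \cite[Theorem~5.1]{cao2023empirical} as a black box to pass from (contractivity $+$ moment control) to the empirical rate $t^{-\varepsilon}$ with $\varepsilon<\min\{1/4,1/(2d)\}$. So your first two stages match the paper, and your third stage is an attempt to reprove the cited black box.

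That third stage, however, has a genuine gap. The $\varepsilon$-net of $1$-Lipschitz functions on a ball $B_r\subset\Real^{2d}$ in sup-norm has cardinality of order $\exp\!\big(C(r/\varepsilon)^{2d}\big)$, not $(r/\varepsilon)^{C\cdot 2d}$ as you wrote; this is the Kolmogorov entropy of the Lipschitz class. With only a second-moment (variance) bound of order $t^{-1}$ for each individual test function, a union bound over exponentially many functions blows up and cannot be balanced against the tail term. The arguments that actually deliver the rate $\min\{1/4,1/(2d)\}$ (e.g.\ those underlying \cite{cao2023empirical}, in the spirit of Fournier--Guillin or Boissard) do not cover the function class; they partition the \emph{state space} dyadically and control mass-transport costs scale by scale, or they upgrade the variance bound to sub-Gaussian concentration before taking a union bound. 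Either fix is standard, but neither is what you sketched.

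Two minor points on stage~1: in your semimetric $\rho$ the term $\alpha|v-v'|$ should be $\alpha|x-x'|$ (compare $r_s$ in~\eqref{large_small_norms}); and the paper's actual $\rho$ is the more elaborate $\rho(x,v)=f\big(\Delta(x,v)\wedge D(\tilde R)+\epsilon_0 r_l(x,v)\big)$ of~\eqref{rho}, blending the ``small'' and ``large'' norms, rather than $f\circ r_s$ alone.
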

    
    We also introduce some auxiliary constants and functions. 
    Firstly, we define norms $r_l,r_s:\Real^d\times\Real^d\to [0,\infty)$ such that
    \begin{equation}\label{large_small_norms}
        \begin{aligned}
        r_l(x,v)^2&:=\gamma^{-2}\langle Kx,x\rangle+\frac{1}{2}\gamma^{-2}\abs{v}^2+\frac{1}{2}\abs{(1-2\tau)x+\gamma^{-1}v}^2,\\
        r_s(x,v)&:=\alpha\abs{x}+\abs{x+\gamma^{-1}v},
        \end{aligned}
    \end{equation}
    where $\tau$ is given by~\eqref{tau} and~$\alpha:=2(L_K+L_g)\gamma^{-2}$.
    Then we have (cf.~\cite{schuh2022global})
    \begin{equation}
        2\epsilon_0 r_l\leq r_s\leq \frac{1}{E_0}r_l,
    \end{equation}
    where
    \begin{equation}\label{epsilon}
    \begin{aligned}
        \epsilon_0&:=\frac{1}{2}\min\big\{1,\frac{2\alpha\gamma}{3\sqrt{L_K}},\alpha\big\},\\
        E_0&:=\frac{1}{2}\min\big\{1,\frac{\sqrt{\kappa}}{\sqrt{2}\alpha\gamma}\big\}. 
    \end{aligned}
    \end{equation} 
    Secondly, we define
    \begin{equation}\label{Delta}
        \Delta(x,v):=r_s(x,v)-\epsilon_0 r_l(x,v)
    \end{equation}
    and
    \begin{equation}\label{dis_compact}
        D(\tilde{R}):=\sup\limits_{r_l(x,v)\leq \Tilde{R}}\Delta(x,v),
    \end{equation}
    where
    \begin{equation}\label{tildeR}
        \Tilde{R}:=\frac{1}{\tau\gamma^2}(8\cdot\mathbb{I}_{\{R>0\}}+L_g R^2).
    \end{equation}
    Based on \eqref{dis_compact} and \eqref{tildeR}, we introduce the constant
    \begin{equation}\label{R1}
        R_1:=\sup\limits_{\Delta(x,v)\leq D(\tilde{R})}r_s(x,v)
    \end{equation}
    and an auxiliary function $f\in C^2[0,\infty)$ adapted from~\cite{du2023sequential,liu2021long} such that
    \begin{equation}\label{auxiliary}
	f^{\prime}(r)=\frac{1}{2}\int_{r}^{\infty}s \exp\Big(-\frac{1}{2}\int_{r}^{s}x\theta(x)\,\d x\Big)\d s,\quad f(0)=0,
    \end{equation}
    for all $r\geq0$, where
    \begin{equation}
        \theta(r):=\begin{cases}
            -(L_K+L_g), &r<R_1\\
            \kappa, &r\geq R_1.
        \end{cases}
    \end{equation}
    Note that $f$ is a concave function satisfying
    \begin{equation}\label{ODE}
	2f^{\prime\prime}(r)-r\theta(r)f^{\prime}(r)=-r
    \end{equation}
    and
    \begin{equation}\label{auxiliary_Lipschitz}
    \begin{aligned}
        f(r)/r\geq\, &f^{\prime}(R_1)=\kappa^{-1},\\
        f(r)/r\leq\, &f^{\prime}(0)=\exp\Big(\frac{L_K+L_g}{4}R_1^2\Big)\kappa^{-1}+\bigg(\exp\Big(\frac{L_K+L_g}{4}R_1^2\Big)-1\bigg)(L_K+L_g)^{-1}
    \end{aligned}
    \end{equation}
    for all $r\geq0$ (cf.~\cite[Lemma 5.1]{du2023sequential}). Note that $f^{\prime}(R_1)f^{\prime}(0)^{-1}$ is equal to $\mathbf{C}_0$ in \eqref{C0}.
    \begin{re}
        If $R=0$, we have $\Tilde{R}=R_1=D(\Tilde{R})=0$, and thus $f(r)=\frac{1}{\kappa}r$ for all $r\geq0$.
    \end{re}

\subsection{Convergence of mean-field dynamics}\label{meanfield}
    In this section, we prove Theorem \ref{MeanFieldConvergence}.
    Let $\mathbf{\lambda},\,\mathbf{\pi}:\Real^d\times\Real^d\to [0,1]$ be smooth functions satisfying $\mathbf{\lambda}(\cdot)^2+\mathbf{\pi}(\cdot)^2=1$ and 
    \begin{equation}\label{lambda}
        \lambda(x,v)=\begin{cases}
		1, &\abs{x+\gamma^{-1}v}\geq\delta\ \text{and}\ \Delta(x,v)\leq D(\tilde{R})\ \text{and}\  D(\tilde{R})>0,\\
		0, &\abs{x+\gamma^{-1}v}=0\ \text{or}\ \Delta(x,v)\geq D(\tilde{R})+\delta\cdot\mathbb{I}_{\{D(\tilde{R})>0\}}.
        \end{cases}
    \end{equation}
    Here the parameter $0<\delta<1$ will be optimized later. Let $(Y_t,U_t)$ be the solution of \eqref{sde2} such that $(Y_0,U_0)=(X_0,V_0)$, and $(\hat{B}_t)$ be a standard Brownian motion on $\Real^d$ independent of $(B_t)$ and $(X_0.V_0)$. We consider the following reflection coupling
    \begin{align}
    \label{sde1_coupling}
        &\left\{\begin{aligned}
            \d \hat{X}_t&=\hat{V}_t\,\d t,\\
            \d \hat{V}_t&=\Big(\be(\hat{X}_t)+\int_{\Real^d}\bi(\hat{X}_t,x)\Lcal(\hat{X}_t)(\d x)-\gamma \hat{V}_t\Big)\d t\\
            &\quad+\sqrt{2\gamma}\mathbf{\lambda}(F_t,G_t)\,\d B_t+\sqrt{2\gamma}\mathbf{\pi}(F_t,G_t)\,\d \hat{B}_t,
        \end{aligned}\right.\\
    \label{sde2_coupling}
        &\left\{\begin{aligned}
            \d \hat{Y}_t&=\hat{U}_t\,\d t,\\
            \d \hat{U}_t&=\Big(\be(\hat{Y}_t)+\int_{\Real^d}\bi(\hat{Y}_t,x)\mu_X^*(\d x)-\gamma \hat{U}_t\Big)\d t\\
            &\quad +\sqrt{2\gamma}\mathbf{\lambda}(F_t,G_t)(I_d-2e_t e_t^{\transpose})\,\d B_t+\sqrt{2\gamma}\mathbf{\pi}(F_t,G_t)\,\d \hat{B}_t,
        \end{aligned}\right.
    \end{align}
    such that 
    $$(\hat{X}_0,\hat{V}_0)=(X_0,V_0),\quad (\hat{Y}_0,\hat{U}_0)=(Y_0,U_0).$$
    Here 
    \[
        F_t:=\hat{X}_t-\hat{Y}_t,\quad G_t:=\hat{V}_t-\hat{U}_t,
    \]
    and 
    \[
        e_t:=\begin{cases}
            \frac{F_t+\gamma^{-1}G_t}{\abs{F_t+\gamma^{-1}G_t}}, &F_t+\gamma^{-1}G_t\neq 0,\\
            0, &F_t+\gamma^{-1}G_t=0.
        \end{cases}
    \]
    Then we have, for all $t\geq0$,
    $$\Lcal(\hat{X}_t,\hat{V}_t)=\Lcal(X_t,V_t),\quad\Lcal(\hat{Y}_t,\hat{U}_t)=\Lcal(Y_t,U_t).$$  
    Let $H_t:=F_t+\gamma^{-1}G_t$, and we get
    \begin{equation}\label{distance01}
        \begin{aligned}
            \d F_t&=G_t\,\d t=\gamma(H_t-F_t)\,\d t,\\
            \d G_t&=-\gamma G_t\,\d t+\Big(\be(\hat{X}_t)-\be(\hat{Y}_t)+\int_{\Real^d}\bi(\hat{X}_t,x)\Lcal(\hat{X}_t)(\d x)-\int_{\Real^d}\bi(\hat{Y}_t,x)\mu_X^*(\d x)\Big)\d t\\
            &\quad+2\sqrt{2\gamma}e_t e_t^{\transpose}\mathbf{\lambda}(F_t,G_t)\,\d B_t,\\
            \d H_t&=\gamma^{-1}\Big(\be(\hat{X}_t)-\be(\hat{Y}_t)+\int_{\Real^d}\bi(\hat{X}_t,x)\Lcal(\hat{X}_t)(\d x)-\int_{\Real^d}\bi(\hat{Y}_t,x)\mu_X^*(\d x)\Big)\d t\\
            &\quad+2\sqrt{2\gamma^{-1}}e_t e_t^{\transpose}\mathbf{\lambda}(F_t,G_t)\,\d B_t.\\
        \end{aligned}
    \end{equation}
    In this section, we abbreviate 
    \begin{equation}\label{abbreviation}
    r_l(t):=r_l(F_t,G_t),\quad r_s(t):=r_s(F_t,G_t),\quad\Delta(t):=\Delta(F_t,G_t)
    \end{equation}
    with $r_s,\,r_l,\,\Delta:\Real^{2d}\to[0,\infty)$ defined in Section \ref{preliminaries}. 

    To prove convergence, we require the following two lemmas, demonstrating convergence in large and small distances, respectively.
    \begin{lemma}\label{lemma_case1}
        For $t\geq0$ such that $\Delta(t)\geq D(\tilde{R})$, we have
        \[\begin{aligned}
        \d r_l(t)
        &\leq \Big(-c_1 r_l(t)+\gamma^{-1}\Li\big(\Wcal_1(\Lcal(X_t),\mu_X^*)+\abs{F_t}\big)\Big)\d t\\
        &\quad+r_l(t)^{-1}\sqrt{2\gamma^{-1}}\mathbf{\lambda}(F_t,G_t)\big((1-2\tau)\gamma^{-1}F_t+2\gamma^{-2}G_t\big)^{\mathsf{T}}e_t e_t^{\transpose}\d B_t,
    \end{aligned}\]
    where $c_1=\frac{\tau\gamma}{2}$.
    \end{lemma}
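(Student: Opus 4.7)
The plan is to apply It\^o's formula to the smooth quadratic form $\phi:=r_l^2$, which is strictly positive in the large-distance regime $\Delta(t)\geq D(\tilde R)$ (the case $D(\tilde R)=0$ forces $R=0$, for which $f(r)=r/\kappa$ is linear and the argument simplifies), and then to pass to $r_l=\sqrt{\phi}$ via
\[
\d r_l=\frac{1}{2r_l}\,\d\phi-\frac{1}{8r_l^3}\,\d\langle\phi\rangle.
\]
The second correction is nonpositive, so it can be dropped when seeking an upper bound, reducing the task to estimating $\d\phi$ and reinserting the factor $(2r_l)^{-1}$.

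The first step is to rewrite the drift of $G_t$ as $\be(\hat X_t)-\be(\hat Y_t)=-KF_t+(g(\hat X_t)-g(\hat Y_t))$ and to split the interaction perturbation as $E^1_t+E^2_t$, where $E^1_t:=\int\bi(\hat X_t,x)(\Lcal(\hat X_t)-\mu_X^*)(\d x)$ and $E^2_t:=\int(\bi(\hat X_t,x)-\bi(\hat Y_t,x))\mu_X^*(\d x)$. Kantorovich duality together with $\abs{\bi}_1=\Li$ gives $\abs{E^1_t}\leq\Li\Wcal_1(\Lcal(X_t),\mu_X^*)$ and $\abs{E^2_t}\leq\Li\abs{F_t}$. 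Combined with the bound $\abs{\nabla_G r_l}\leq\gamma^{-1}$, obtained by expanding $\nabla_G\phi=(1-2\tau)\gamma^{-1}F_t+2\gamma^{-2}G_t$ and checking $\abs{\nabla_G\phi}^2\leq 4\gamma^{-2}\phi$ directly against the quadratic form defining $r_l^2$, these yield the $\gamma^{-1}\Li(\Wcal_1(\Lcal(X_t),\mu_X^*)+\abs{F_t})$ term appearing on the right-hand side.

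The core step is the hypocoercive drift estimate for the autonomous Langevin part with drift $(G_t,-KF_t-\gamma G_t)$. Differentiating the three blocks comprising $\phi$, the cross terms $\pm 2\gamma^{-2}\langle KF_t,G_t\rangle$ cancel exactly, and the surviving quadratic form in $(F_t,G_t)$ is pointwise dominated by $-2\tau\gamma\,\phi$ thanks to the specific coefficient $1-2\tau$ built into \eqref{large_small_norms} and the restriction $\tau\leq 1/8$ from \eqref{tau}. The contribution of $g(\hat X_t)-g(\hat Y_t)$, controlled by $(2\gamma^{-2}\abs{G_t}+(1-2\tau)\gamma^{-1}\abs{F_t})L_g\abs{F_t}$, is absorbed into half of this contraction via Young's inequality, using precisely the inequality $\tau\leq\gamma^{-2}\kappa/2-\gamma^{-4}L_g^2$ also encoded in \eqref{tau}. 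The It\^o correction from the $G$-diffusion $\Sigma_G=2\sqrt{2\gamma}\lambda(F_t,G_t)e_t e_t^{\transpose}$, namely $\tfrac{1}{2}\mathrm{Tr}(\Sigma_G\Sigma_G^{\transpose}\nabla_G^2\phi)=8\gamma^{-1}\lambda(F_t,G_t)^2$, is dominated by the nonpositive term $-\tfrac{1}{8r_l^3}\d\langle\phi\rangle$ after using the lower bound $r_l\geq E_0 r_s\geq E_0\abs{H_t}$ valid on the support of $\lambda$. Dividing the resulting bound $\d\phi\leq -\tau\gamma\,\phi\,\d t+\cdots$ by $2r_l$ yields the contraction rate $c_1=\tau\gamma/2$, and contracting $\nabla_G r_l$ against $\Sigma_G\,\d B_t$ reproduces the explicit martingale expression stated in the lemma.

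The main obstacle I anticipate is the bookkeeping in the hypocoercive step: verifying that after differentiating the three blocks comprising $\phi$ along the Langevin drift, the resulting quadratic form is pointwise dominated by $-2\tau\gamma\phi$ requires careful matching of coefficients and a genuine use of both sub-conditions in \eqref{tau}, as well as of the monotonicity of $g$ outside $\abs{F_t}\geq R$ from Assumption \ref{dissi}(ii). This is essentially the Eberle--Guillin--Zimmer/Schuh hypocoercive construction cited in the paper; once the contraction rate is secured, the interaction perturbation and the martingale identification are routine.
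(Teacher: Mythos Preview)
Your broad strategy---It\^o's formula on $\phi=r_l^2$, then pass to $r_l$ and divide by $2r_l$---matches the paper, and your treatment of the interaction perturbation via Kantorovich duality together with $\abs{\nabla_G\phi}\le 2\gamma^{-1}r_l$ is correct. However, your handling of the It\^o correction $8\gamma^{-1}\lambda(F_t,G_t)^2$ has a genuine gap. You propose to absorb this term (after division by $2r_l$) into the nonpositive correction $-\tfrac{1}{8r_l^3}\,\d\langle\phi\rangle$. But $\d\langle\phi\rangle_t = 8\gamma\,\lambda^2\,\bigl((\nabla_G\phi)^{\transpose}e_t\bigr)^2\,\d t$, so the two contributions sum to
\[
\frac{\lambda^2}{r_l}\Bigl(4\gamma^{-1}-\frac{\gamma}{r_l^2}\bigl((\nabla_G\phi)^{\transpose}e_t\bigr)^2\Bigr),
\]
and since $\bigl((\nabla_G\phi)^{\transpose}e_t\bigr)^2\le\abs{\nabla_G\phi}^2\le 4\gamma^{-2}r_l^2$, this expression is \emph{nonnegative}, not nonpositive. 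In particular it vanishes only when $F_t=0$ and $\nabla_G\phi\parallel H_t$; for generic $(F_t,G_t)$ it is strictly positive, so your cancellation mechanism fails. A related issue is your treatment of the $g$-contribution: the bound $(2\gamma^{-2}\abs{G_t}+(1-2\tau)\gamma^{-1}\abs{F_t})L_g\abs{F_t}$ cannot be fully absorbed into the hypocoercive contraction by Young's inequality alone, because the $(1-2\tau)\gamma^{-1}L_g\abs{F_t}^2$ piece has no slack in the $\tau$-condition.

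The mechanism you are missing is the actual role of the hypothesis $\Delta(t)\ge D(\tilde R)$. In the paper it is used not merely to ensure $r_l>0$, but to deduce the quantitative lower bound $r_l(t)^2\ge\tilde R$ (from the very definition \eqref{dis_compact} of $D(\tilde R)$) together with $\lambda(F_t,G_t)\le\mathbb{I}_{\{R>0\}}$. One then exploits the monotonicity of $g$ for $\abs{F_t}\ge R$ to reduce the $BF\cdot g$ contribution to $L_g(1-2\tau)\gamma^{-1}\abs{F_t}^2\,\mathbb{I}_{\{\abs{F_t}<R\}}\le L_g\gamma^{-1}R^2$, and spends half of the contraction $-2\tau\gamma\,r_l^2$ to absorb both this residual and the It\^o correction via
\[
-\tau\gamma\,r_l(t)^2 + L_g\gamma^{-1}R^2 + 8\gamma^{-1}\mathbb{I}_{\{R>0\}}\le -\tau\gamma\,\tilde R + L_g\gamma^{-1}R^2 + 8\gamma^{-1}\mathbb{I}_{\{R>0\}}=0,
\]
which is precisely the definition \eqref{tildeR} of $\tilde R$. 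This is where the ``large-distance'' regime assumption is genuinely used, and it is what produces the clean rate $c_1=\tau\gamma/2$.
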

    \begin{proof}
    Let
    \[A:=\gamma^{-2}K+\frac{1}{2}(1-2\tau)^2I_d,\quad B:=(1-2\tau)\gamma^{-1}I_d,\quad C:=\gamma^{-2}I_d,\]
    and we have
    \[r_l(x,v)^2=\langle Ax,x\rangle+\langle Bx,v\rangle+\langle Cv,v\rangle.\]
    By Ito's formula we have
    \[
    \begin{aligned}
        \d r_l(t)^2 =\,&\d\big(\langle AF_t,F_t\rangle+\langle BF_t,G_t\rangle+\langle CG_t,G_t\rangle\big)\\
        \leq\,& 2\langle AF_t,G_t\rangle\,\d t+\big(\langle BG_t,G_t\rangle-\gamma\langle BF_t,G_t\rangle-\langle BF_t,KF_t\rangle \big)\,\d t\\
        &+L_g(1-2\tau)\gamma^{-1}\abs{F_t}^2\cdot\mathbb{I}_{\{\abs{F_t}<R\}}\,\d t\\
        &+\big(-2\gamma\langle CG_t,G_t\rangle-2\langle KF_t,CG_t\rangle+2\gamma^{-2}L_g \abs{F_t}\abs{G_t}\big)\,\d t\\  &+\abs{BF_t+2CG_t}\Li\big(\Wcal_1(\Lcal(X_t),\mu_X^*)+\abs{F_t}\big)\,\d t\\
        &+8\gamma^{-1}\mathbf{\lambda}(F_t,G_t)^2\d t+2\sqrt{2\gamma}\mathbf{\lambda}(F_t,G_t)(BF_t+2CG_t)^{\transpose}e_t e_t^{\transpose}\d B_t\\
        \leq\,& \big\langle (-BK+\gamma^{-3}L_g^2)F_t,F_t\big\rangle\,\d t+\big((1-2\tau)\gamma^{-1}-\gamma^{-1}\big)\abs{G_t}^2\d t\\
        &+\langle (2A-\gamma B-2KC)F_t,G_t\rangle\,\d t+L_g(1-2\tau)\gamma^{-1}\abs{F_t}^2\cdot\mathbb{I}_{\{\abs{F_t}<R\}}\,\d t\\
        &+\abs{(1-2\tau)\gamma^{-1}F_t+2\gamma^{-2}G_t}\Li\big(\Wcal_1(\Lcal(X_t),\mu_X^*)+\abs{F_t}\big)\,\d t\\
        &+8\gamma^{-1}\mathbf{\lambda}(F_t,G_t)^2\d t+2\sqrt{2\gamma}\mathbf{\lambda}(F_t,G_t)\big((1-2\tau)\gamma^{-1}F_t+2\gamma^{-2}G_t\big)^{\transpose}e_t e_t^{\transpose}\d B_t.
    \end{aligned}
    \]
    Here we use
    \[2\gamma^{-2}L_g\abs{F_t}\abs{G_t}=\gamma^{-1}(2\gamma^{-1}L_g\abs{F_t}\cdot\abs{G_t})\leq \gamma^{-1}\big(\gamma^{-2}L_g^2\abs{F_t}^2+\abs{G_t}^2\big).\]
    Note that 
    \[\begin{aligned}
        \langle (-BK+\gamma^{-3}L_g^2)F_t,F_t\rangle&=-(1-2\tau)\gamma^{-1}\langle KF_t,F_t\rangle+\gamma^{-3}L_g^2\abs{F_t}^2\\
        &\leq -2\tau\gamma\langle\gamma^{-2}KF_t,F_t\rangle-\gamma^{-1}((1-4\tau)\kappa-L_g^2\gamma^{-2})\abs{F_t}^2\\
        &\leq -2\tau\gamma\langle\gamma^{-2}KF_t,F_t\rangle+(-\frac{1}{2}\kappa\gamma^{-1}+L_g^2\gamma^{-3})\abs{F_t}^2\\
        &\leq -2\tau\gamma\langle\gamma^{-2}KF_t,F_t\rangle-\tau\gamma\abs{F_t}^2\\
        &\leq -2\tau\gamma\big(\langle\gamma^{-2}KF_t,F_t\rangle-\frac{1}{2}(1-2\tau)^2\abs{F_t}^2\big).\\
    \end{aligned}\]
    Thus we get
    \[
    \begin{aligned}
        \d r_l(t)^2
        \leq\,& -2\tau\gamma r_l(t)^2\,\d t+L_g(1-2\tau)\gamma^{-1}\abs{F_t}^2\cdot\mathbb{I}_{\{\abs{F_t}<R\}}\,\d t\\
        &+\gamma^{-1}\abs{(1-2\tau)F_t+2\gamma^{-1}G_t}\Li\big(\Wcal_1(\Lcal(X_t),\mu_X^*)+\abs{F_t}\big)\,\d t\\
        &+8\gamma^{-1}\mathbf{\lambda}(F_t,G_t)^2\d t+2\sqrt{2\gamma^{-1}}\mathbf{\lambda}(F_t,G_t)\big((1-2\tau)\gamma^{-1}F_t+2\gamma^{-2}G_t\big)^{\transpose}e_t e_t^{\transpose}\d B_t.
    \end{aligned}
    \]
    Since $\Delta(t)\geq D(\tilde{R})$, by \eqref{dis_compact} and \eqref{lambda} we have $r_l(t)^2\geq \Tilde{R}$ and $\mathbf{\lambda}(F_t,G_t)\leq \mathbb{I}_{\{R>0\}}$. Thus by \eqref{tildeR} we get
    \[\begin{aligned}
        &-\tau\gamma r_l(t)^2+L_g(1-2\tau)\gamma^{-1}\abs{F_t}^2\cdot\mathbb{I}_{\{\abs{F_t}<R\}}+8\gamma^{-1}\mathbf{\lambda}(F_t,G_t)^2\\
        \leq& -\tau\gamma\tilde{R}+L_g R^2\gamma^{-1}+8\gamma^{-1}\mathbb{I}_{\{R>0\}}\leq 0.
    \end{aligned}\]
    Applying Ito's formula again we have
    \[\begin{aligned}
        \d &r_l(t)\leq \frac{1}{2r_l(t)}\d r_l(t)^2\\
        \leq& -\frac{\tau\gamma}{2} r_l(t)\,\d t+(2\gamma r_l(t))^{-1}\abs{(1-2\tau)F_t+2\gamma^{-1}G_t}\Li\big(\Wcal_1(\Lcal(X_t),\mu_X^*)+\abs{F_t}\big)\,\d t\\
        &+r_l(t)^{-1}\sqrt{2\gamma^{-1}}\mathbf{\lambda}(F_t,G_t)\big((1-2\tau)\gamma^{-1}F_t+2\gamma^{-2}G_t\big)^{\transpose}e_t e_t^{\transpose}\d B_t\\
        \leq& \Big(-\frac{\tau\gamma}{2} r_l(t)+\gamma^{-1}\Li\big(\Wcal_1(\Lcal(X_t),\mu_X^*)+\abs{F_t}\big)\Big)\d t\\
        &+r_l(t)^{-1}\sqrt{2\gamma^{-1}}\mathbf{\lambda}(F_t,G_t)\big((1-2\tau)\gamma^{-1}F_t+2\gamma^{-2}G_t\big)^{\transpose}e_t e_t^{\transpose}\d B_t,
    \end{aligned}\]
    where we use 
    \[(2r_l(t))^{-1}\abs{(1-2\tau)F_t+2\gamma^{-1}G_t}=\frac{1}{2}\sqrt{\frac{\abs{(1-2\tau)F_t+2\gamma^{-1}G_t}^2}{r_l(t)^2}}\leq 1.\]
    The proof is complete.
    \end{proof}
    \begin{lemma}\label{lemma_case2}
        For $t\geq0$ such that $\Delta(t)< D(\tilde{R})$, we have
        \[\begin{aligned}
        \d f(r_s(t))
        &\leq -c_2f(r_s(t))\d t+\gamma^{-1}\Li\big(\Wcal_1(\Lcal(X_t),\mu_X^*)+\abs{F_t}\big)\d t\\
        &\quad-\frac{\gamma\alpha}{4}f^{\prime}(R_1)\abs{F_t}\d t+(\alpha+\frac{1}{2})\delta\gamma\,\d t +\d \mathbf{M}_t,
        \end{aligned}\]
        where $f$ is given by \eqref{auxiliary}, $(\mathbf{M}_t)$ is a martingale, and 
        \[c_2=f^{\prime}(0)^{-1}\min\Big\{2\gamma^{-1},\frac{\gamma}{4}f^{\prime}(R_1)\Big\}\in(0,\infty).\]
    \end{lemma}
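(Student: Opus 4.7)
The plan is to derive a differential inequality for $f(r_s(t))$ by applying It\^o's formula in two stages: first to the norm $r_s(F_t,G_t) = \alpha\abs{F_t}+\abs{H_t}$, and then to its composition with $f$. I would begin by computing $\d\abs{F_t}$ and $\d\abs{H_t}$ separately from \eqref{distance01}. Because $\d F_t = G_t\,\d t = \gamma(H_t-F_t)\,\d t$ carries no noise, one gets $\d\abs{F_t}\leq\gamma(\abs{H_t}-\abs{F_t})\,\d t$ (the local time at the origin vanishes for lack of diffusion in $F$). For $\abs{H_t}$, the driving noise $2\sqrt{2\gamma^{-1}}\lambda(F_t,G_t) e_te_t^{\transpose}\,\d B_t$ is purely radial, since $e_t = H_t/\abs{H_t}$, so the It\^o correction cancels exactly. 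Bounding the drift of $H$ via $\be = -Kx+g$ with Lipschitz constants $L_K,L_g$ and the standard triangle-inequality estimate on the interaction term yields $\d\abs{H_t}\leq\gamma^{-1}\big[(L_K+L_g)\abs{F_t}+\Li(\Wcal_1(\Lcal(X_t),\mu_X^*)+\abs{F_t})\big]\,\d t + 2\sqrt{2\gamma^{-1}}\lambda(F_t,G_t)\,e_t^{\transpose}\,\d B_t$. Summing and invoking the identity $\alpha\gamma = 2(L_K+L_g)\gamma^{-1}$ collapses the $\abs{F_t}$ coefficient to $-\alpha\gamma/2$, with $\d\langle r_s\rangle_t = 8\gamma^{-1}\lambda(F_t,G_t)^2\,\d t$ coming from the radial noise.

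The algebraic heart is stage two. It\^o for $f(r_s)$ adds the correction $4\gamma^{-1}\lambda^2 f''(r_s(t))\,\d t$. Since $\Delta(t)\leq D(\tilde R)$ implies $r_s(t)\leq R_1$ by \eqref{R1}, one is in the regime $\theta(r_s) = -(L_K+L_g)$, and the ODE \eqref{ODE} rewrites $4\gamma^{-1}f''(r_s) = -\alpha\gamma\,r_s f'(r_s) - 2\gamma^{-1}r_s$. Substituting $\abs{H_t} = r_s(t)-\alpha\abs{F_t}$, in the perfect-coupling case $\lambda=1$ the expansionary drift $f'(r_s)\alpha\gamma\abs{H_t}$ cancels against this ODE term to leave exactly $-\alpha^2\gamma\abs{F_t}f'(r_s) - 2\gamma^{-1}r_s$. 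Adding the $-\tfrac{\alpha\gamma}{2}\abs{F_t}f'(r_s)$ drift and invoking $f'(r_s)\geq f'(R_1)$ from concavity, I would split the $\abs{F_t}$ contribution in half, displaying $-\tfrac{\alpha\gamma}{4}f'(R_1)\abs{F_t}$ explicitly and combining the other half with $-2\gamma^{-1}r_s$ via the Lipschitz bound $r_s\geq f(r_s)/f'(0)$ from \eqref{auxiliary_Lipschitz} to produce the contractive term $-c_2 f(r_s)$ with the stated $c_2$. The $\Li$ term is handled by estimating $f'(r_s)\leq f'(0)$, whose overall contribution is absorbed into the smallness condition \eqref{ineq02}.

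The main obstacle will be the transition region where $\lambda(F_t,G_t)<1$. By \eqref{lambda}, within $\{\Delta(t)\leq D(\tilde R)\}$ this can only happen when $\abs{H_t}<\delta$, and the identity above then leaves two surpluses from the $(1-\lambda^2)$ discount: $(1-\lambda^2)f'(r_s)\alpha\gamma\abs{H_t}$ from the incomplete drift cancellation and $(1-\lambda^2)\cdot 2\gamma^{-1}r_s$ from the ODE term. Using $\abs{H_t}<\delta$ bounds the first surplus by $f'(0)\alpha\gamma\delta$, and $r_s\leq\alpha\abs{F_t}+\delta$ in this region together with the retained $-\tfrac{\alpha\gamma}{4}f'(R_1)\abs{F_t}$ absorbs the $\alpha\abs{F_t}$ part of the second, leaving a residue of order $\tfrac{1}{2}\delta\gamma$. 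Together these produce the $(\alpha+\tfrac{1}{2})\delta\gamma\,\d t$ error appearing in the claim. Finally, the martingale $\mathbf M_t$ is the stochastic integral of $2\sqrt{2\gamma^{-1}}\lambda(F_s,G_s)f'(r_s(s))\,e_s^{\transpose}$ against $\d B_s$, which is genuine since $\lambda\leq 1$ and $f'\leq f'(0)$.
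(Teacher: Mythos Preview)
Your first stage and your treatment of the full-reflection region $\lambda=1$ are correct and coincide with the paper's argument (your substitution $|H_t|=r_s-\alpha|F_t|$ is slightly sharper than the paper's cruder bound $|H_t|\le r_s$, but the output is the same).

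The gap is in the transition region $|H_t|<\delta$. The $(1-\lambda^2)$ discount acts on $4\gamma^{-1}f''$, and by the ODE \eqref{ODE} (with $\theta=-(L_K+L_g)$, so $4\gamma^{-1}f''=-\alpha\gamma r_s f'-2\gamma^{-1}r_s$) the surplus is
\[
-(1-\lambda^2)\,4\gamma^{-1}f''(r_s)=(1-\lambda^2)\big[\alpha\gamma\, r_s\, f'(r_s)+2\gamma^{-1}r_s\big],
\]
with $r_s$ in the first bracket, not $|H_t|$. Since $r_s=\alpha|F_t|+|H_t|$ and $|F_t|$ is unrestricted here, this first surplus cannot be bounded by $f'(0)\alpha\gamma\delta$ as you claim. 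If you try instead to absorb the resulting $|F_t|$-parts into your retained negative $|F_t|$-terms, two obstructions appear: the first surplus contributes $\alpha^2\gamma f'(0)|F_t|$ (since $f'(r_s)\le f'(0)$) while you only retain $-\alpha^2\gamma f'(r_s)|F_t|$, leaving $\alpha^2\gamma(f'(0)-f'(r_s))|F_t|\ge 0$; and absorbing the $2\gamma^{-1}\alpha|F_t|$ from the second surplus into $-\tfrac{\alpha\gamma}{4}f'(R_1)|F_t|$ forces $8\gamma^{-2}\le f'(R_1)=\kappa^{-1}$, i.e.\ $\kappa\le\gamma^2/8$, which is nowhere assumed. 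Moreover, once the surplus eats into $-2\gamma^{-1}r_s$ you no longer produce the $\tfrac{\gamma}{4}f'(R_1)f'(0)^{-1}$ branch of $c_2$.

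The paper's route here is simpler and avoids all of this. In the region $|H_t|<\delta$ one just discards $4\gamma^{-1}\lambda^2 f''\le 0$ entirely (concavity), and rewrites the drift via $\alpha|F_t|=r_s-|H_t|$:
\[
-\tfrac{\alpha\gamma}{2}|F_t|+\alpha\gamma|H_t|
=-\tfrac{\gamma}{2}(r_s-|H_t|)+\alpha\gamma|H_t|
\le -\tfrac{\gamma}{2}r_s+(\alpha+\tfrac12)\gamma\delta.
\]
Multiplying by $f'(r_s)$ and splitting $-\tfrac{\gamma}{2}f'(r_s)r_s$ in two halves, one bounded via $r_s\ge f(r_s)/f'(0)$ and the other via $r_s\ge\alpha|F_t|$ (both with $f'(r_s)\ge f'(R_1)$), yields $-\tfrac{\gamma}{4}f'(R_1)f'(0)^{-1}f(r_s)-\tfrac{\gamma\alpha}{4}f'(R_1)|F_t|$, which is exactly where the second branch of the $\min$ in $c_2$ and the displayed $-\tfrac{\gamma\alpha}{4}f'(R_1)|F_t|$ both come from.
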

    \begin{proof}
        Note that $\d F_t=\gamma(H_t-F_t)\d t$, and then by Ito--Tanaka formula we have
        \[
        \begin{aligned}
            \d\abs{F_t}\left\{
            \begin{aligned}
                &=\big\langle\frac{F_t}{\abs{F_t}},\gamma(H_t-F_t)\big\rangle\d t\leq (-\gamma\abs{F_t}+\gamma\abs{H_t})\d t,\ &F_t\neq 0\\
                &\leq \gamma\abs{H_t}\d t,\ &F_t=0.
            \end{aligned}
            \right.
        \end{aligned}
        \]
        Thus we get 
    \[\d \abs{F_t}\leq (-\gamma\abs{F_t}+\gamma\abs{H_t})\,\d t.\]
    Applying Ito--Tanaka formula to $\abs{H_t}$, we get by \eqref{distance01}, and then by Assumption \ref{dissi}:
    \[\begin{aligned}
        \d\abs{H_t}=\,&\gamma^{-1}\Big\langle e_t,\be(\hat{X}_t)-\be(\hat{Y}_t)+\int_{\Real^d}\bi(\hat{X}_t,x)\Lcal(\hat{X}_t)(\d x)-\int_{\Real^d}\bi(\hat{Y}_t,x)\mu_X^*(\d x)\Big\rangle\d t\\
        &+2\sqrt{2\gamma^{-1}}\mathbf{\lambda}(F_t,G_t)e_t^{\transpose}\d B_t,\\
        \leq& \gamma^{-1}(L_K+L_g)\abs{F_t}\d t+\gamma^{-1}\Li\big(\abs{F_t}+\Wcal_1(\Lcal(X_t),\mu_X^*)\big)\,\d t+2\sqrt{2\gamma^{-1}}\mathbf{\lambda}(F_t,G_t)e_t^{\transpose}\d B_t.
    \end{aligned}\]
    Hence, by $r_s(t)=\alpha\abs{F_t}+\abs{H_t}$ we have 
    \[
    \begin{aligned}
        \d r_s(t)\leq& \Big(\gamma^{-1}(L_K+L_g-\alpha\gamma^2)\abs{F_t}+\alpha\gamma\abs{H_t}+\gamma^{-1}\Li\big(\abs{F_t}+\Wcal_1(\Lcal(X_t),\mu_X^*)\big)\Big)\d t\\
        &+2\sqrt{2\gamma^{-1}}\mathbf{\lambda}(F_t,G_t)e_t^{\transpose}\d B_t\\
        =&\Big(-\frac{1}{2}\alpha\gamma\abs{F_t}+\alpha\gamma\abs{H_t}+\gamma^{-1}\Li\big(\abs{F_t}+\Wcal_1(\Lcal(X_t),\mu_X^*)\big)\Big)\d t\\
        &+2\sqrt{2\gamma^{-1}}\mathbf{\lambda}(F_t,G_t)e_t^{\transpose}\d B_t
    \end{aligned}\]
    By Ito's formula, we have
    \[
    \begin{aligned}
        \d f(r_s(t))
        \leq\, &f^{\prime}(r_s(t))\Big(-\frac{1}{2}\alpha\gamma\abs{F_t}+\alpha\gamma\abs{H_t}+\gamma^{-1}\Li\big(\abs{F_t}+\Wcal_1(\Lcal(X_t),\mu_X^*)\big)\Big)\d t\\
        &+2f^{\prime}(r_s(t))\sqrt{2\gamma^{-1}}\mathbf{\lambda}(F_t,G_t)e_t^{\transpose}\d B_t+4f^{\prime\prime}(r_s(t))\gamma^{-1}\mathbf{\lambda}(F_t,G_t)^2\d t.
    \end{aligned}\]

    If $\abs{H_t}>\delta$, by $\Delta(t)< D(\tilde{R})$ we have $\mathbf{\lambda}(F_t,G_t)=1,\,r_s(t)<R_1$. By~\eqref{ODE},~\eqref{auxiliary_Lipschitz}, and the concavity of $f$, we obtain
    \[\begin{aligned}
        \d f(r_s(t))
        \leq\,&\big(\alpha\gamma f^{\prime}(r_s(t))r_s(t)+4f^{\prime\prime}(r_s(t))\gamma^{-1}\big)\d t-\frac{1}{2}\alpha\gamma f^{\prime}(r_s(t))\abs{F_t}\d t\\
        &+\gamma^{-1}\Li f^{\prime}(0)\big(\abs{F_t}+\Wcal_1(\Lcal(X_t),\mu_X^*)\big)\d t+\d \mathbf{M}_t\\
        =&-2\gamma^{-1}r_s(t)\d t-\frac{1}{2}\alpha\gamma f^{\prime}(r_s(t))\abs{F_t}\d t\\
        &+\gamma^{-1}\Li f^{\prime}(0)\big(\abs{F_t}+\Wcal_1(\Lcal(X_t),\mu_X^*)\big)\d t+\d \mathbf{M}_t\\
        \leq& -2\gamma^{-1}f^{\prime}(0)^{-1}f(r_s(t))\d t+\gamma^{-1}\Li f^{\prime}(0)\big(\abs{F_t}+\Wcal_1(\Lcal(X_t),\mu_X^*)\big)\d t\\
        &-\frac{\gamma\alpha}{2}f^{\prime}(R_1)\abs{F_t}\d t+\d \mathbf{M}_t.
    \end{aligned}\]

    If $\abs{H_t}\leq\delta$, we have 
    \[-\frac{1}{2}\alpha\abs{F_t}+\alpha\abs{H_t}\leq -\frac{1}{2}(r_s(t)-\delta)+\alpha\delta=-\frac{1}{2}r_s(t)+(\alpha+\frac{1}{2})\delta.\]
    Hence, by~\eqref{ODE},~\eqref{auxiliary_Lipschitz}, and concavity we get
    \[
    \begin{aligned}
        \d f(r_s(t))\leq& -\frac{\gamma}{2}f^{\prime}(r_s(t))r_s(t)\d t+(\alpha+\frac{1}{2})\gamma\delta\,\d t\\
        &+\gamma^{-1}\Li f^{\prime}(0)\big(\abs{F_t}+\Wcal_1(\Lcal(X_t),\mu_X^*)\big)\d t+\d \mathbf{M}_t\\
        \leq& -\frac{\gamma}{4}f^{\prime}(R_1)f^{\prime}(0)^{-1}f(r_s(t))\d t-\frac{\gamma\alpha}{4}f^{\prime}(R_1)\abs{F_t}\d t+(\alpha+\frac{1}{2})\gamma\delta\,\d t\\
        &+\gamma^{-1}\Li f^{\prime}(0)\big(\abs{F_t}+\Wcal_1(\Lcal(X_t),\mu_X^*)\big)\d t+\d \mathbf{M}_t.\\
    \end{aligned}
    \]

    Combining the above two cases, we finish the proof.
    \end{proof}

    Let us turn back to the proof of Theorem~\ref{MeanFieldConvergence}.
    We introduce the function
    \begin{equation}\label{rho}
        \rho(x,v):=f\big(\Delta(x,v)\wedge D(\tilde{R})+\epsilon_0 r_l(x,v)\big).
    \end{equation}
    By \cite[Lemma 16]{schuh2022global}, it holds for all $(x,v)\in\Real^d\times\Real^d$ that
    \begin{equation}\label{metric_equi}
        \rho(x,v)\geq \mathbf{C}_1\abs{(x,v)},
    \end{equation}
    with the  constant 
    \begin{equation}\label{C1}
            \mathbf{C}_1=f^{\prime}(R_1)\epsilon_0\gamma^{-1}\min\{\sqrt{\kappa},\frac{\sqrt{2}}{2}\}.
    \end{equation}
    Hence, it suffices to estimate the upper bound of
    \begin{equation}\label{key_metric}
        \rho(t):=\rho(F_t,G_t).
    \end{equation} 
    Just as we did in Lemmas \ref{lemma_case1} and \ref{lemma_case2}, we divide the estimate into two cases.\\
    \emph{Case 1:} $\Delta(t)< D(\tilde{R})$, then $\rho(t)=f(r_s(t))$. By \eqref{ineq02} and Lemma \ref{lemma_case2} we have
     \begin{equation}\label{distribution_case2}
         \begin{aligned}
        \d \rho(t)
        \leq& -c_2f(r_s(t))\,\d t+\gamma^{-1}\Li f^{\prime}(0)\big(\Wcal_1(\Lcal(X_t),\mu_X^*)+\abs{F_t}\big)\,\d t\\
        &-\frac{\gamma\alpha}{4}f^{\prime}(R_1)\abs{F_t}\d t+(\alpha+\frac{1}{2})\delta\gamma\,\d t +\d \mathbf{M}_t\\
        \leq& -c_2f(r_s(t))\,\d t+\gamma^{-1}\Li f^{\prime}(0)\Wcal_1(\Lcal(X_t),\mu_X^*)\,\d t\\
        &-\frac{\gamma\alpha}{8}f^{\prime}(R_1)\abs{F_t}\d t+(\alpha+\frac{1}{2})\delta\gamma\,\d t +\d \mathbf{M}_t.
        \end{aligned}
     \end{equation}  
     Here we use 
     \[\Li\leq \mathbf{C}_0\frac{L_K+L_g}{4}=f^{\prime}(R_1)f^{\prime}(0)^{-1}\frac{L_K+L_g}{4}=\frac{\gamma^2\alpha}{8}f^{\prime}(R_1)f^{\prime}(0)^{-1}.\]
    \emph{Case 2:} $\Delta(t)\geq D(\tilde{R})$, then applying Ito's formula we get, by $f^{\prime\prime}\leq0$ and Lemma \ref{lemma_case1},
     \[
     \begin{aligned}
        \d &\rho(t)\leq \epsilon_0 f^{\prime}(D(\tilde{R})+\epsilon_0 r_l(t))\Big(-c_1 r_l(t)\\
        &+\gamma^{-1}\Li\big(\Wcal_1(\Lcal(X_t),\mu_X^*)+\abs{F_t}\big)\Big)\d t+\d \tilde{\mathbf{M}}_t,
    \end{aligned}\]
    where $(\tilde{\mathbf{M}}_t)$ is a martingale.
    Note that
    \[
    \begin{aligned}
        -\epsilon_0 &f^{\prime}(D(\tilde{R})+\epsilon_0 r_l(t))c_1 r_l(t)=-\frac{1}{2}\cdot2 f^{\prime}(D(\tilde{R})+\epsilon_0 r_l(t))\epsilon_0 c_1 r_l(t)\\
        &\leq -\frac{1}{2}\Big(\big(\inf\limits_{r\geq0}\frac{rf^{\prime}(r)}{f(r)}\big)\frac{\epsilon_0 c_1 r_l(t)}{D(\tilde{R})+\epsilon_0 r_l(t)}\rho(t)+ f^{\prime}(R_1)\epsilon_0 c_1 r_l(t)\Big)\\
        &\leq -\frac{1}{2}f^{\prime}(R_1)f^{\prime}(0)^{-1}\frac{\epsilon_0 c_1 r_l(t)}{D(\tilde{R})+\epsilon_0 r_l(t)}\rho(t)-\frac{1}{2}f^{\prime}(R_1)\epsilon_0 c_1 r_l(t).
    \end{aligned}\]
    Since $\Delta(t)\geq D(\tilde{R})$, we have (recall the definition of $E_0$ in \eqref{epsilon}) 
    \[\frac{r_l(t)}{D(\tilde{R})+\epsilon_0 r_l(t)}\geq \frac{r_l(t)}{r_s(t)}\geq E_0,\]
    which yields 
    \[
    \begin{aligned}
        -\epsilon_0 f^{\prime}(D(\tilde{R})+\epsilon_0 r_l(t))c_1 r_l(t)\leq -\frac{1}{2}f^{\prime}(R_1)f^{\prime}(0)^{-1}c_1\epsilon_0 E_0 \rho(t)-\frac{1}{2}f^{\prime}(R_1)\epsilon_0 c_1 r_l(t).
    \end{aligned}\]
    Hence, utilizing \eqref{auxiliary_Lipschitz}, $\epsilon_0\leq 1/2$ and $r_l(t)\geq \sqrt{\kappa\gamma^{-2}}\abs{F_t}$ we get 
    \begin{equation}\label{distribution_case1}
        \begin{aligned}
        \d\rho(t)\leq& -\frac{1}{2}f^{\prime}(R_1)f^{\prime}(0)^{-1}c_1\epsilon_0 E_0 \rho(t)\,\d t-\frac{1}{2}f^{\prime}(R_1)\epsilon_0 c_1 r_l(t)\,\d t\\
        &+\epsilon_0 \gamma^{-1}\Li f^{\prime}(0)\big(\Wcal_1(\Lcal(X_t),\mu_X^*)+\abs{F_t}\big)\,\d t+\d \mathbf{\tilde{M}}_t\\
        \leq& -\frac{1}{2}f^{\prime}(R_1)f^{\prime}(0)^{-1}c_1\epsilon_0 E_0 \rho(t)\,\d t-\frac{1}{2}f^{\prime}(R_1)\epsilon_0 c_1 \sqrt{\kappa\gamma^{-2}}\abs{F_t}\d t\\
        &+\frac{1}{2} \gamma^{-1}\Li f^{\prime}(0)\big(\Wcal_1(\Lcal(X_t),\mu_X^*)+\abs{F_t}\big)\,\d t+\d \mathbf{\tilde{M}}_t.
    \end{aligned}
    \end{equation}

    Combining the above two cases and taking expectations on \eqref{distribution_case2} and \eqref{distribution_case1}, we obtain by \eqref{ineq02}
    \begin{equation}\label{ODI}
        \begin{aligned}
         \frac{\d}{\d t}\e[\rho(t)]\leq& -c_3 \e[\rho(t)]-\min\big\{\frac{\gamma\alpha}{8}f^{\prime}(R_1),\frac{1}{2}f^{\prime}(R_1)\epsilon_0 c_1 \sqrt{\kappa\gamma^{-2}}\big\}\e[\abs{F_t}]\\
         &+\frac{1}{2}\gamma^{-1}\Li f^{\prime}(0)\e[\abs{F_t}]+\frac{1}{2}\gamma^{-1}\Li f^{\prime}(0)\Wcal_1(\Lcal(X_t),\mu_X^*)+(\alpha+\frac{1}{2})\delta\\
         \leq& -c_3 \e[\rho(t)]+\frac{1}{2}\gamma^{-1}\Li f^{\prime}(0)\Wcal_1(\Lcal(X_t),\mu_X^*)+(\alpha+\frac{1}{2})\delta,
        \end{aligned}
    \end{equation}
    where 
    \begin{equation}\label{c_3}
        c_3=\min\{c_2,\frac{1}{2}f^{\prime}(R_1)f^{\prime}(0)^{-1}c_1\epsilon_0 E_0\}=f^{\prime}(0)^{-1}\min\{2\gamma^{-1},\frac{\gamma}{4}\kappa^{-1},\frac{1}{2}\kappa^{-1}c_1\epsilon_0 E_0\}.
    \end{equation}
    Here we use 
    \begin{equation}\label{ineq03}
        \gamma^{-1}\Li f^{\prime}(0)\leq \min\big\{\frac{\gamma\alpha}{8}f^{\prime}(R_1),\frac{1}{2}f^{\prime}(R_1)\epsilon_0 c_1 \sqrt{\kappa\gamma^{-2}}\big\},
    \end{equation}
    since it holds $\Li\leq \frac{\gamma^2\alpha}{8}f^{\prime}(R_1)f^{\prime}(0)^{-1}$ and 
    \[
    \begin{aligned}
        \Li&\leq f^{\prime}(R_1)f^{\prime}(0)^{-1}\frac{\gamma\tau}{12}\sqrt{2\kappa}\min\{1,\alpha\}\\
        &=f^{\prime}(R_1)f^{\prime}(0)^{-1}c_1\sqrt{\kappa}\cdot \frac{\sqrt{2}}{6}\min\{1,\alpha\}\\
        &= \frac{1}{2}f^{\prime}(R_1)f^{\prime}(0)^{-1}c_1\sqrt{\kappa}\cdot \frac{\sqrt{2}}{3}\min\{1,\alpha,\sqrt{\alpha}\}\\
        &\leq \frac{1}{2}f^{\prime}(R_1)f^{\prime}(0)^{-1}c_1\sqrt{\kappa}\cdot \frac{1}{2}\min\{1,\alpha,\frac{2}{3}\sqrt{2\alpha}\}\\
        &\leq \frac{1}{2}f^{\prime}(R_1)f^{\prime}(0)^{-1}c_1\sqrt{\kappa}\cdot \frac{1}{2}\min\{1,\alpha,\frac{2\alpha\gamma}{3\sqrt{L_K}}\}\\
        &=\frac{1}{2}f^{\prime}(R_1)f^{\prime}(0)^{-1}\epsilon_0 c_1\sqrt{\kappa}.
    \end{aligned}\]
    By \cite[Theorem 12]{schuh2022global}, we have 
    $$\Wcal_1(\Lcal(X_t),\mu_X^*)\leq \Wcal_1(\Lcal(X_t,V_t),\mu^*)\leq C\exp(-ct)$$ 
    for some constants $c,C>0$. Hence, applying Gronwall's inequality to \eqref{ODI} we have
    \[\e[\rho(t)]\leq C\exp(-ct)+C\delta\] for some $c,C>0$ independent of $\delta$ and $t$.
    Thus we obtain by \eqref{metric_equi}
    \begin{align*}
        \e\Big[\Wcal_1\big(\Ecal_t(X,V),\Ecal_t(Y,U)\big)\Big]\leq& \int_0^1 \e\Big[\abs{\big(\hat{X}_{ts}-\hat{Y}_{ts},\hat{V}_{ts}-\hat{U}_{ts}\big)}\Big]\d s\\
        \leq& \frac{1}{\mathbf{C}_1}\int_0^1 \e[\rho(ts)]\,\d s\\
        \leq &C(\frac{1}{t}+\delta)
    \end{align*}
    for all $\delta>0$ and some constant $C>0$ independent of $t$ and $\delta$. Let $\delta\to0$, we have
    \[\e\Big[\Wcal_1\big(\Ecal_t(X,V),\Ecal_t(Y,U)\big)\Big]\leq \frac{C}{t}.\]

    Combining the above with Theorem \ref{MarkovConvergence}, we complete the proof of Theorem \ref{MeanFieldConvergence}.
    
\subsection{Convergence of self-interacting dynamics}\label{SelfInteracting}
    This section is dedicated to the proof of Theorem \ref{SelfInteractingConvergence}. We follow the notations of the previous section and consider the following reflection coupling:
    \begin{align}
    \label{sde3_coupling}
        &\left\{\begin{aligned}
            \d \hat{Z}_t&=\hat{W}_t\,\d t,\\
            \d \hat{W}_t&=\Big(\be(\hat{Z}_t)+\int_{\Real^d}\bi(\hat{Z}_t,x)\Ecal(\hat{Z}_t)(\d x)-\gamma \hat{W}_t\Big)\d t\\
            &\quad+\sqrt{2\gamma}\mathbf{\lambda}(F_t,G_t)\,\d B_t+\sqrt{2\gamma}\mathbf{\pi}(F_t,G_t)\,\d\hat{B}_t,
        \end{aligned}\right.\\
    \label{sde2.2_coupling}
        &\left\{\begin{aligned}
            \d \hat{Y}_t&=\hat{U}_t\,\d t,\\
            \d \hat{U}_t&=\Big(\be(\hat{Y}_t)+\int_{\Real^d}\bi(\hat{Y}_t,x)\mu_X^*(\d x)-\gamma \hat{U}_t\Big)\d t\\
            &\quad+\sqrt{2\gamma}\mathbf{\lambda}(F_t,G_t)(I_d-2e_t e_t^{\transpose})\,\d B_t+\sqrt{2\gamma}\mathbf{\pi}(F_t,G_t)\,\d \hat{B}_t,
        \end{aligned}\right.
    \end{align}
     where $$Z_0=\hat{Z}_0=Y_0=\hat{Y}_0,\quad W_0=\hat{W}_0=U_0=\hat{U}_0.$$
    After similar computation to \eqref{ODI}, we acquire by \eqref{ineq03}
    \begin{align*}
        \frac{\d}{\d t}\e[\rho(t)]\leq& -c_3 \e[\rho(t)]-\min\big\{\frac{\gamma\alpha}{8}f^{\prime}(R_1),\frac{1}{2}f^{\prime}(R_1)\epsilon_0 c_1 \sqrt{\kappa\gamma^{-2}}\big\}\e[\abs{F_t}]\\
         &+\frac{1}{2}\gamma^{-1}\Li f^{\prime}(0)\e[\abs{F_t}]+\frac{1}{2}\gamma^{-1}\Li f^{\prime}(0)\e[\Wcal_1(\Ecal_t(Z),\mu_X^*)]+(\alpha+\frac{1}{2})\delta\\
         \leq& -c_3 \e[\rho(t)]-\frac{1}{2}\gamma^{-1}\Li f^{\prime}(0)\e[\abs{F_t}]\\
         & +\frac{1}{2}\gamma^{-1}\Li f^{\prime}(0)\e[\Wcal_1(\Ecal_t(Z),\mu_X^*)]+(\alpha+\frac{1}{2})\delta\\
         \leq& -c_3 \e[\rho(t)]-\frac{1}{2}\gamma^{-1}\Li f^{\prime}(0)\e[\abs{F_t}]+\frac{1}{2}\gamma^{-1}\Li f^{\prime}(0)\e[\Wcal_1(\Ecal_t(Z),\Ecal_t(Y))]\\
         &+\frac{1}{2}\gamma^{-1}\Li f^{\prime}(0)\e[\Wcal_1(\Ecal_t(Y),\mu_X^*)]+(\alpha+\frac{1}{2})\delta\\
         \leq& -c_3 \e[\rho(t)]-\frac{1}{2}\gamma^{-1}\Li f^{\prime}(0)\e[\abs{F_t}]+\frac{1}{2}\gamma^{-1}\Li f^{\prime}(0)\int_0^1\e[\abs{F_{ts}}]\d s\\
         &+\frac{1}{2}\gamma^{-1}\Li f^{\prime}(0)\e[\Wcal_1(\Ecal_t(Y),\mu_X^*)]+(\alpha+\frac{1}{2})\delta.
    \end{align*}
    By Theorem \ref{MarkovConvergence}, we have 
    $$\e[\Wcal_1(\Ecal_t(Y),\mu_X^*)]\leq \e[\Wcal_1(\Ecal_t(Y,U),\mu^*)]\leq M (1\wedge t^{-\varepsilon})$$
    for any~$0<\varepsilon<\min\{\frac{1}{4},\frac{1}{2d}\}$. Here $M>0$ is a constant independent of $t$, whose value may vary through lines. We take $\delta=\delta(t)=M(1\wedge t^{-1})$, write $$h(t):=\e[\rho(t)],\quad g(t):=f^{\prime}(0)\e[\abs{F_t}],$$
    and get
    \begin{equation}
        \frac{\d}{\d t}h(t)\leq -c_3 h(t)-\frac{1}{2}\gamma^{-1}\Li g(t)+\frac{1}{2}\gamma^{-1}\Li\int_0^1 g(ts)\,\d s+M(1\wedge t^{-\varepsilon}).
    \end{equation}
    Integrating from $0$ to $t$ and then dividing by $t$ on both sides, we have
    \begin{align*}
        0\leq h(t)/t\leq &-c_3\int_0^1 h(ts)\,\d s-\frac{1}{2}\gamma^{-1}\Li\int_0^1 g(ts)\,\d s\\
        &+\frac{1}{2}\gamma^{-1}\Li\int_0^1\int_0^1 g(ts_1 s_2)\,\d s_1 \d s_2+M(1\wedge t^{-\varepsilon}).
    \end{align*}
    We write 
    \begin{equation}\label{C3}
        \mathbf{C}_3=c_3 \mathbf{C}_1 f^{\prime}(0)^{-1},
    \end{equation}
    where $c_3$ and $\mathbf{C}_1$ are given by \eqref{c_3} and \eqref{C1} respectively.
    For~$0<\varepsilon<\min\{\frac{1}{4},\frac{1}{2d},\frac{\mathbf{C}_3}{\mathbf{C}_3+\gamma^{-1}\Li/2}\}$, we fix $M$ and get
    \begin{equation}\label{continuity_method}        
    c_3 t^{\varepsilon}\int_0^1 h(ts)\,\d s\leq -\frac{1}{2}\gamma^{-1}\Li t^{\varepsilon}\int_0^1 g(ts)\,\d s+\frac{1}{2}\gamma^{-1}\Li t^{\varepsilon}\int_0^1\int_0^1 g(ts_1 s_2)\,\d s_1 \d s_2+M.
    \end{equation}
        
    Now we estimate $h(t)$ and $g(t)$ through \eqref{continuity_method}, taking advantage of continuity. We claim that 
    \begin{equation}\label{neq_claim}
        \sup\limits_{t\geq0} t^{\varepsilon}\int_0^1 g(ts)\,\d s\leq \frac{2}{\mathbf{C}_3-\frac{1}{2}\gamma^{-1}\Li\frac{\varepsilon}{1-\varepsilon}}M.
    \end{equation}
    Otherwise, we assume 
    \[T:=\inf\Big\{t\geq0:\,t^{\varepsilon}\int_0^1 g(ts)\,\d s> \frac{2}{\mathbf{C}_3-\frac{1}{2}\gamma^{-1}\Li\frac{\varepsilon}{1-\varepsilon}}M\Big\},\]
    and instantly get
    \[\frac{2}{\mathbf{C}_3-\frac{1}{2}\gamma^{-1}\Li\frac{\varepsilon}{1-\varepsilon}}M=T^{\varepsilon}\int_0^1 g(Ts)\,\d s=\sup\limits_{0\leq t\leq T} t^{\varepsilon}\int_0^1 g(ts)\,\d s.\]
    By \eqref{metric_equi}, we have
    \begin{align*}
        &\frac{2\mathbf{C}_3}{\mathbf{C}_3-\frac{1}{2}\gamma^{-1}\Li\frac{\varepsilon}{1-\varepsilon}}M\leq c_3 T^{\varepsilon}\int_0^1 h(Ts)\,\d s\\
        &\leq -\frac{1}{2}\gamma^{-1}\Li T^{\varepsilon}\int_0^1 g(Ts)\,\d s+\frac{1}{2}\gamma^{-1}\Li\int_0^1 s_1^{-\varepsilon}(Ts_1)^{\varepsilon}\int_0^1 g(Ts_1 s_2)\,\d s_2 \d s_1+M\\
        &\leq -\frac{1}{2}\gamma^{-1}\Li T^{\varepsilon}\int_0^1 g(Ts)\,\d s+\frac{1}{2}\gamma^{-1}\Li\int_0^1 s_1^{-\varepsilon}\,\d s_1 T^{\varepsilon}\int_0^1 g(T s_2)\,\d s_2 +M\\
        &=\frac{1}{2}\gamma^{-1}\Li \frac{\varepsilon}{1-\varepsilon}\frac{2}{\mathbf{C}_3-\frac{1}{2}\gamma^{-1}\Li\frac{\varepsilon}{1-\varepsilon}}M+M\\
        &=\frac{\mathbf{C}_3+\frac{1}{2}\gamma^{-1}\Li\frac{\varepsilon}{1-\varepsilon}}{\mathbf{C}_3-\frac{1}{2}\gamma^{-1}\Li\frac{\varepsilon}{1-\varepsilon}}M\\
        &<\frac{2\mathbf{C}_3}{\mathbf{C}_3-\frac{1}{2}\gamma^{-1}\Li\frac{\varepsilon}{1-\varepsilon}}M,
    \end{align*}
    which leads to contradiction! Hence, \eqref{neq_claim} holds, and we obtain
    \[\frac{\d}{\d t}h(t)\leq -c_3 h(t)+C(1\wedge t^{-\varepsilon})\]
    for some $C>0$. By Gronwall's inequality, we have 
    $h(t)\leq C t^{-\varepsilon}$ and thus by \eqref{metric_equi}
    \[\e[\Wcal_1(\Ecal_t(Z,W),\Ecal(Y,U))]\leq \frac{1}{\mathbf{C}_1}\int_0^1 h(ts)\,\d s\leq C t^{-\varepsilon}\]
    for some constant $C>0$.
    Combining the above with Theorem \ref{MarkovConvergence}, we complete the proof of Theorem \ref{SelfInteractingConvergence}.
    \begin{re}
        If $R=0$, \eqref{ineq02} can be reduced to
        \begin{equation}\label{ineq_special}
            \Li\leq \frac{\tau\gamma}{8}\sqrt{\kappa},
        \end{equation}
        and empirical ergodicity of \eqref{sde1} and \eqref{sde3} still holds. In fact, we only need Lemma~\ref{lemma_case1} for proofs of Theorems \ref{MeanFieldConvergence} and \ref{SelfInteractingConvergence} in the $R=0$ scenario.
    \end{re}

\section{Proof of Theorem \ref{MarkovConvergence}}\label{appendix}
    By \cite[Theorem 5.1]{cao2023empirical}, empirical ergodicity of the Markovian equation \eqref{sde2}, \ie~Theorem~\ref{MarkovConvergence}, follows from its moment control and contractivity. 
    Hence, it suffices to prove the following two propositions.
    \begin{prop}\label{L2}
        Suppose Assumption \ref{dissi}, \eqref{ineq01} and \eqref{ineq02} hold. Then for any solution $(Y_t,U_t)$ of \eqref{sde2} whose initial value satisfies 
        \[\e[\abs{Y_0}^2+\abs{U_0}^2]<\infty,\]
        there exist a constant $$M_2=M_2\Big(d,\gamma,L_K,L_g,\kappa,\e[\abs{Y_0}^2+\abs{U_0}^2]\Big)\in(0,\infty)$$
        such that 
        \[\sup\limits_{t\geq0}\e[\abs{Y_t}^2+\abs{U_t}^2]\leq M_2.\]
    \end{prop}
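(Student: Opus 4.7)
I would take as Lyapunov function $V(y,u) := r_l(y,u)^2$, where $r_l$ is defined in Section \ref{preliminaries}. A direct expansion shows that $V$ is a strictly positive-definite quadratic form in $(y,u)$: the $yy$-block of its matrix representation is $\gamma^{-2}K + \tfrac{1}{2}(1-2\tau)^2 I_d$, the $uu$-block is $\gamma^{-2}I_d$, and the Schur complement $\gamma^{-2}K + \tfrac{1}{4}(1-2\tau)^2 I_d$ is positive definite whenever $K$ is. Hence there exist constants $0 < c_* \leq C_* < \infty$ depending only on $\gamma, \kappa, L_K, \tau$ such that $c_*(|y|^2 + |u|^2) \leq V(y,u) \leq C_*(|y|^2 + |u|^2)$, so it suffices to bound $\e[V(Y_t, U_t)]$ uniformly in $t$.

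I would next apply It\^o's formula to $V(Y_t, U_t)$. Replacing the coupled drift difference of Lemma \ref{lemma_case1} by the single-process drift $F(y) - \gamma u$, where $F(y) := \be(y) + \int_{\Real^d}\bi(y,x)\,\mu_X^*(\d x)$, the same chain of algebraic manipulations used there produces the principal dissipation $-2\tau\gamma V(Y_t, U_t)$, whose positivity is exactly the condition $L_g^2 < \gamma^2 \kappa / 2$, i.e., \eqref{ineq01}. The contributions that are new relative to Lemma \ref{lemma_case1} are: (i) the indicator term $L_g(1-2\tau)\gamma^{-1}|Y_t|^2 \mathbb{I}_{\{|Y_t| < R\}}$, bounded deterministically by $L_g(1-2\tau)\gamma^{-1} R^2$; (ii) a linear-in-$(Y_t, U_t)$ contribution from the constant $F(0) = \be(0) + \int\bi(0,x)\mu_X^*(\d x)$, which is finite because $\mu^* \in \Pcal_1(\Real^{2d})$ by \cite[Theorem 12]{schuh2022global}; and (iii) the It\^o correction $\tfrac{1}{2}\tr(2\gamma \cdot 2\gamma^{-2} I_d) = 2\gamma^{-1} d$. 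Young's inequality absorbs (ii) into $\tau\gamma V(Y_t, U_t)$ at the cost of an additive constant, so after taking expectations one obtains
\[
    \frac{\d}{\d t}\e[V(Y_t,U_t)] \leq -\tau\gamma\,\e[V(Y_t,U_t)] + M_0
\]
for some $M_0 = M_0(d, \gamma, L_K, L_g, \kappa, \Li, R, \norm{\mu^*}_1)$.

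Gr\"onwall's inequality then yields $\e[V(Y_t,U_t)] \leq \e[V(Y_0, U_0)]\, e^{-\tau\gamma t} + M_0/(\tau\gamma)$, uniformly in $t \geq 0$, and the lower equivalence $c_*(|y|^2+|u|^2) \leq V$ converts this into the asserted constant $M_2$, which by construction depends only on $d, \gamma, L_K, L_g, \kappa, \Li, R, \norm{\mu^*}_1$ and $\e[|Y_0|^2 + |U_0|^2]$. The main obstacle lies in step two: although it parallels Lemma \ref{lemma_case1} term by term, one must carefully track the inhomogeneous pieces coming from $\be(0)$ and $\int \bi(0,x)\mu_X^*(\d x)$, which were absent in the coupling computation, and verify that Young's inequality absorbs them without degrading the contraction rate $\tau\gamma$. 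Everything else is routine bookkeeping.
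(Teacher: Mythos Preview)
Your proposal is correct and follows essentially the same route as the paper: Lyapunov function $r_l^2$, It\^o's formula, a net dissipation of $-\tau\gamma\,r_l^2$ after absorption, and Gr\"onwall.

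One point deserves more care than your item (ii) suggests. The contribution from $\int b^{\tt I}(Y_t,x)\,\mu_X^*(\d x)$ is not purely linear in $(Y_t,U_t)$ via the constant $F(0)$. Writing $|b^{\tt I}(Y_t,x)|\le |b^{\tt I}(0,0)|+\Li(|Y_t|+|x|)$, the It\^o expansion of $r_l^2$ acquires, besides the linear pieces you list, the genuinely quadratic cross-term
\[
\gamma^{-1}\Li\,|Y_t|\cdot\big|(1-2\tau)Y_t+\gamma^{-1}U_t\big|,
\]
which is the single-process analogue of the $\Li|F_t|$ term left over in Lemma~\ref{lemma_case1}. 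This term cannot be pushed into the additive constant $M_0$; it must be absorbed into the dissipation, and this is precisely where the smallness hypothesis \eqref{ineq02} is used (the paper invokes it to replace $\gamma^{-1}\Li$ by $\tfrac{\tau\sqrt\kappa}{8}$ and then applies Young's inequality to fold the result into $\tfrac{\tau\gamma}{4}r_l^2$). Once you account for this term, your outline matches the paper's argument step for step.
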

    \begin{prop}\label{MarkovContraction_prop}
        Suppose Assumption \ref{dissi}, \eqref{ineq01} and \eqref{ineq02} hold.
         Then there exist constants $M_1>0$ and $c>0$ such that for any solutions $(Y_t,U_t)$ and $(\hat{Y}_t,\hat{U}_t)$ of \eqref{sde2}, whose initial values have finite second moment, we have 
        \[W_1\big(\Lcal(Y_t,U_t),\Lcal(\hat{Y}_t,\hat{U}_t)\big)\leq M_1 \exp(-ct)W_1\big(\Lcal(Y_0,U_0),\Lcal(\hat{Y}_0,\hat{U}_0)\big).\]
    \end{prop}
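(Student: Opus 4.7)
The plan is to mimic the reflection coupling argument from Section~\ref{meanfield} (Lemmas~\ref{lemma_case1} and~\ref{lemma_case2}) but applied to two independent copies of the \emph{Markovian} equation~\eqref{sde2}, in which case the drift no longer depends on a time-varying law; both processes see the same frozen marginal $\mu_X^*$. This removes the perturbation term $\gamma^{-1}\Li\,\Wcal_1(\Lcal(X_t),\mu_X^*)$ that appeared in the mean-field setting, so the resulting differential inequality becomes strictly dissipative and Gronwall yields the exponential decay.

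Concretely, given two solutions $(Y_t,U_t)$ and $(\hat Y_t,\hat U_t)$ of~\eqref{sde2} whose initial laws are coupled optimally in $\Wcal_1$, I would build a coupling $(\hat Y_t,\hat U_t,\tilde Y_t,\tilde U_t)$ in which both marginals solve~\eqref{sde2}, the noises are synthesized through the same $\lambda,\pi$ partition defined in~\eqref{lambda}, and the reflection operator $I_d-2e_te_t^\transpose$ is applied exactly as in~\eqref{sde1_coupling}--\eqref{sde2_coupling}. Writing $F_t,G_t,H_t$ for the differences of positions, velocities, and the shifted velocity $F_t+\gamma^{-1}G_t$, the computations of Lemmas~\ref{lemma_case1} and~\ref{lemma_case2} apply \emph{verbatim}, except that the extra terms $\gamma^{-1}\Li\,\Wcal_1(\Lcal(X_t),\mu_X^*)$ drop out because both copies share the same static reference $\mu_X^*$. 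With $\rho$ as in~\eqref{rho}, this gives in Case~1 ($\Delta(t)<D(\tilde R)$) a bound of the form
\[
\d\rho(t)\leq -c_2 f(r_s(t))\,\d t + \gamma^{-1}\Li f'(0)\abs{F_t}\,\d t -\tfrac{\gamma\alpha}{4}f'(R_1)\abs{F_t}\,\d t + (\alpha+\tfrac12)\delta\gamma\,\d t + \d\mathbf{M}_t,
\]
and in Case~2 ($\Delta(t)\geq D(\tilde R)$) a bound controlled by $-\tfrac12 f'(R_1)f'(0)^{-1}c_1\epsilon_0 E_0\rho(t)$ plus an $\Li\abs{F_t}$ term. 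Assumption~\eqref{ineq02}, via~\eqref{ineq03}, absorbs the residual $\Li\abs{F_t}$ pieces into the dissipative quantities, so taking expectations gives
\[
\tfrac{\d}{\d t}\e[\rho(t)]\leq -c_3\,\e[\rho(t)] + (\alpha+\tfrac12)\delta\gamma
\]
with $c_3$ as in~\eqref{c_3}, completely free of any distribution-dependent error.

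Applying Gronwall yields $\e[\rho(t)]\leq e^{-c_3 t}\e[\rho(0)] + C\delta$; then sending $\delta\downarrow 0$ (as at the end of Section~\ref{meanfield}) produces $\e[\rho(t)]\leq e^{-c_3 t}\e[\rho(0)]$. To translate this into the claimed $\Wcal_1$ contraction I would use the two-sided comparison between $\rho$ and the Euclidean norm: the lower bound~\eqref{metric_equi} gives $\abs{(F_t,G_t)}\leq \mathbf{C}_1^{-1}\rho(t)$, while an upper bound of the form $\rho(x,v)\leq \mathbf{C}_2\abs{(x,v)}$ follows from the concavity of $f$ together with $f'(0)<\infty$ and the Lipschitz dependence of $r_s,r_l,\Delta$ on $(x,v)$. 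Optimally coupling the initial conditions then gives $\e[\rho(0)]\leq \mathbf{C}_2\,\Wcal_1(\Lcal(Y_0,U_0),\Lcal(\hat Y_0,\hat U_0))$, and combining everything,
\[
\Wcal_1\bigl(\Lcal(Y_t,U_t),\Lcal(\hat Y_t,\hat U_t)\bigr)\leq \mathbf{C}_1^{-1}\e[\rho(t)]\leq \mathbf{C}_1^{-1}\mathbf{C}_2 e^{-c_3 t}\,\Wcal_1\bigl(\Lcal(Y_0,U_0),\Lcal(\hat Y_0,\hat U_0)\bigr),
\]
which is the desired statement with $M_1=\mathbf{C}_1^{-1}\mathbf{C}_2$ and $c=c_3$.

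The main obstacle is bookkeeping rather than conceptual: one must verify that every term in Lemmas~\ref{lemma_case1}--\ref{lemma_case2} that was bounded using the triangle inequality $\Wcal_1(\Lcal(X_t),\mu_X^*)+\abs{F_t}$ in the mean-field setting now collapses to $\abs{F_t}$ alone, and that the Lipschitz upper bound $\rho\leq \mathbf{C}_2\abs{(x,v)}$ genuinely holds despite the truncation $\Delta\wedge D(\tilde R)$. A subsidiary issue is justifying the $\delta\downarrow 0$ limit: one should either pick a sequence $\delta_n\to 0$ and invoke lower semicontinuity of $\Wcal_1$ with respect to weak convergence of couplings, or approximate $\lambda,\pi$ by indicator-type functions and use stability of the coupling SDE. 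Proposition~\ref{L2} plays no role in the contraction itself, only guaranteeing the finiteness of $\e[\rho(0)]$ under the second-moment initial condition.
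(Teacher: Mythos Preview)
Your proposal is correct and mirrors the paper's own proof: couple two copies of~\eqref{sde2} by reflection, note that the $\Wcal_1(\Lcal(X_t),\mu_X^*)$ term from the analogue of~\eqref{ODI} disappears because both processes share the same frozen marginal $\mu_X^*$, derive $\tfrac{\d}{\d t}\e[\rho(t)]\leq -c_3\e[\rho(t)]+(\alpha+\tfrac12)\delta$, apply Gronwall, convert to $\Wcal_1$ via~\eqref{metric_equi}, and let $\delta\to 0$. The paper is terser and does not spell out the upper bound $\rho\leq \mathbf{C}_2\abs{(\cdot,\cdot)}$ that you correctly flag as needed to pass from $\e[\rho(0)]$ back to $\Wcal_1$ of the initial data, but otherwise the two arguments coincide.
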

    Note that Propositions \ref{L2} and \ref{MarkovContraction_prop} also yield the existence and uniqueness of the invariant measure $\mu^*$ of \eqref{sde2}, which is identical to that of \eqref{sde1} (cf.~\cite{wang2018distribution}). 
\subsection{Proof of Proposition \ref{L2}}
    For the moment control of the solution $(Y_t,U_t)$ of \eqref{sde2}, we consider 
    \[r_l(Y_t,U_t)^2=\gamma^{-2}\langle KY_t,Y_t\rangle+\frac{1}{2}\abs{(1-2\tau)Y_t+\gamma^{-1}U_t}^2+\frac{1}{2}\gamma^{-2}\abs{U_t}^2.\]
    Following the ideas of~\cite[Lemma 23]{schuh2022global}, we apply Ito's formula to $r_l(Y_t,U_t)^2$ and get
    \begin{align*}
        \d &r_l(Y_t,U_t)^2\\
        \leq\,& 2\gamma^{-2}\langle KY_t,U_t\rangle\,\d t+\big((1-2\tau)^2\langle Y_t,U_t\rangle+\gamma^{-1}(1-2\tau)\abs{U_t}^2\big)\,\d t\\
        &-\gamma^{-1}(1-2\tau)\big(\langle KY_t,Y_t\rangle+\gamma\langle Y_t,U_t\rangle\big)\,\d t\\
        &+2\gamma^{-2}\big(-\langle KY_t,U_t\rangle+L_g\abs{Y_t}\abs{U_t}-\gamma\abs{U_t}^2\big)\,\d t\\ 
        &+\gamma^{-1}\abs{(1-2\tau)Y_t+\gamma^{-1}U_t}\big(\Li\abs{Y_t}+\Li\norm{\mu^*}_1+\big\vert\bi(0,0)\big\vert\big)\,\d t\\
        &+\gamma^{-1}(1-2\tau)\big(L_g\abs{Y_t}^2+\abs{g(0)}\abs{Y_t}\big)\cdot\mathbb{I}_{\{\abs{Y_t}<R\}}\,\d t
        +2\gamma^{-2}\abs{U_t}\abs{g(0)}\d t+2\gamma^{-1}d\,\d t\\
        &+\sqrt{2\gamma^{-1}}\big((1-2\tau)Y_t+2\gamma^{-1}U_t\big)^{\transpose}\d B_t\\
        \leq& -\gamma^{-1}(1-2\tau)\langle KY_t,Y_t\rangle\,\d t-2\tau\gamma\big(\gamma^{-2}\abs{U_t}^2+\gamma^{-1}(1-2\tau)\langle Y_t,U_t\rangle\big)\,\d t\\
        &+\gamma^{-3}L_g^2\abs{Y_t}^2\d t+\gamma^{-1}\abs{(1-2\tau)Y_t+\gamma^{-1}U_t}\big(\Li\abs{Y_t}+\Li\norm{\mu^*}_1+\big\vert\bi(0,0)\big\vert\big)\,\d t\\
        &+\gamma^{-1}(1-2\tau)\big(L_g\abs{Y_t}^2+\abs{g(0)}\abs{Y_t}\big)\cdot\mathbb{I}_{\{\abs{Y_t}<R\}}\,\d t
        +2\gamma^{-2}\abs{U_t}\abs{g(0)}\d t+2\gamma^{-1}d\,\d t\\
        &+\sqrt{2\gamma^{-1}}\big((1-2\tau)Y_t+2\gamma^{-1}U_t\big)^{\transpose}\d B_t.
    \end{align*}
    Taking expectation, we acquire
    \begin{align*}
        \frac{\d}{\d t} \e[r_l(Y_t,U_t)^2]\leq& -\gamma^{-1}(1-2\tau)\e[\langle KY_t,Y_t\rangle]+\gamma^{-3}L_g^2\e[\abs{Y_t}^2]\\
        &-2\tau\gamma\big(\gamma^{-2}\e[\abs{U_t}^2]+\gamma^{-1}(1-2\tau)\e[\langle Y_t,U_t\rangle]\big)\\
        &+\gamma^{-1}\e\Big[\abs{(1-2\tau)Y_t+\gamma^{-1}U_t}\big(\Li\abs{Y_t}+\Li\norm{\mu^*}_1+\big\vert\bi(0,0)\big\vert\big)\Big]\\
        &+\gamma^{-1}(1-2\tau)(L_g R^2+\abs{g(0)}R)
        +2\gamma^{-2}\abs{g(0)}\e[\abs{U_t}]+2\gamma^{-1}d.
    \end{align*}
    By \eqref{ineq02} and Young's inequality, we have
    \begin{align*}
        \gamma^{-1}&\e\Big[\abs{(1-2\tau)Y_t+\gamma^{-1}U_t}\big(\Li\abs{Y_t}+\Li\norm{\mu^*}_1+\big\vert\bi(0,0)\big\vert\big)\Big]\\
        \leq& \frac{\tau\sqrt{\kappa}}{8}\e\big[\abs{(1-2\tau)Y_t+\gamma^{-1}U_t}\abs{Y_t}\big]\\
        &+\gamma^{-1}\e\big[\abs{(1-2\tau)Y_t+\gamma^{-1}U_t}\big]\big(\big\vert\bi(0,0)\big\vert+\Li\norm{\mu^*}_1\big)\\
        \leq& \frac{\tau\gamma}{4}\Big(\frac{1}{4}\e\big[\abs{(1-2\tau)Y_t+\gamma^{-1}U_t}^2\big]+\frac{\kappa\gamma^{-2}}{4}\e[\abs{Y_t}^2]\Big)\\
        &+\frac{\tau\gamma}{16}\e\big[\abs{(1-2\tau)Y_t+\gamma^{-1}U_t}^2\big]+\frac{4}{\tau\gamma^3}\big(\big\vert\bi(0,0)\big\vert+\Li\norm{\mu^*}_1\big)^2\\
        \leq& \frac{\tau\gamma}{4}\Big(\kappa\gamma^{-2}\e[\abs{Y_t}^2]+\frac{1}{2}\e\big[\abs{(1-2\tau)Y_t+\gamma^{-1}U_t}^2\big]\Big)\\
        &+\frac{4}{\tau\gamma^3}\big(\big\vert\bi(0,0)\big\vert+\Li\norm{\mu^*}_1\big)^2,
    \end{align*}
    and
    \[2\gamma^{-2}\abs{g(0)}\e[\abs{U_t}]\leq \frac{\tau\gamma}{4}\e[\gamma^{-2}\abs{U_t}^2]+\frac{4}{\tau\gamma^3}\abs{g(0)}^2.\]
    Also, by \eqref{tau} we have
    \begin{align*}
        \langle x,\,(-(1-4\tau)\gamma^{-1}K+\gamma^{-3}L_g^2 I_d)x\rangle &\leq (-\frac{1}{2}\kappa\gamma^{-1}+\gamma^{-3}L_g^2)\abs{x}^2\\
        &\leq -\tau\gamma\abs{x}^2\\
        &\leq -\tau\gamma(1-2\tau)^2\abs{x}^2
    \end{align*}
    for all $x\in\Real^d$.
    Hence we obtain
    \begin{align*}
        \frac{\d}{\d t} &\e[r_l(Y_t,U_t)^2]\\
        \leq& -2\tau\gamma\e\big[\gamma^{-2}\langle KY_t,Y_t\rangle+\frac{1}{2}\abs{(1-2\tau)Y_t+\gamma^{-1}U_t}^2+\frac{1}{2}\gamma^{-2}\abs{U_t}^2\big]\\
        &+\tau\gamma\Big(\kappa\gamma^{-2}\e[\abs{Y_t}^2]+\frac{1}{2}\e\big[\abs{(1-2\tau)Y_t+\gamma^{-1}U_t}^2\big]+\frac{1}{2}\e[\gamma^{-2}\abs{U_t}^2]\Big)\\
        &+\frac{4}{\tau\gamma^3}\big(\big\vert\bi(0,0)\big\vert+\Li\norm{\mu^*}_1\big)^2+\frac{4}{\tau\gamma^3}\abs{g(0)}^2\\
        &+\gamma^{-1}(1-2\tau)(L_g R^2+\abs{g(0)}R)+2\gamma^{-1}d\\
        \leq& -\tau\gamma\e[r_l(Y_t,U_t)^2]+\frac{4}{\tau\gamma^3}\big(\big\vert\bi(0,0)\big\vert+\Li\norm{\mu^*}_1\big)^2\\
        &+\frac{4}{\tau\gamma^3}\abs{g(0)}^2+\gamma^{-1}(1-2\tau)(L_g R^2+\abs{g(0)}R)+2\gamma^{-1}d.
    \end{align*}
    By Gronwall's inequality, there exists a constant $C>0$ such that
    \[\e[r_l(Y_t,U_t)^2]\leq C\]
    for all $t\geq0$. We note that $r_l$ induces a norm on $\Real^{2d}$ equivalent to the Euclidean norm, and thus complete the proof of Proposition \ref{L2}.

\subsection{Proof of Proposition \ref{MarkovContraction_prop}}
    We consider the following reflection coupling: 
    \begin{align}\label{markov1_coupling}
        &\left\{\begin{aligned}
            \d \hat{Y}_t^1&=\hat{U}_t^1\d t,\\
            \d \hat{U}_t^1&=\Big(\be(\hat{Y}_t^1)+\int_{\Real^d}\bi(\hat{Y}_t^1,x)\mu_X^*(\d x)-\gamma \hat{U}_t^1\Big)\d t\\
            &\quad+\sqrt{2\gamma}\mathbf{\lambda}(F_t,G_t)\,\d B_t+\sqrt{2\gamma}\mathbf{\pi}(F_t,G_t)\,\d \hat{B}_t.
        \end{aligned}\right.\\
        \label{markov2_coupling}
        &\left\{\begin{aligned}
            \d \hat{Y}_t^2&=\hat{U}_t^2\d t,\\
            \d \hat{U}_t^2&=\Big(\be(\hat{Y}_t^2)+\int_{\Real^d}\bi(\hat{Y}_t^2,x)\mu_X^*(\d x)-\gamma \hat{U}_t^2\Big)\d t\\
            &\quad+\sqrt{2\gamma}\mathbf{\lambda}(F_t,G_t)(I_d-2e_t e_t^{\transpose})\,\d B_t+\sqrt{2\gamma}\mathbf{\pi}(F_t,G_t)\,\d \hat{B}_t.
        \end{aligned}\right.
    \end{align}
    Here we follow the notations of the Sections \ref{meanfield} and \ref{SelfInteracting}, \ie 
    \[F_t=\hat{Y}_t^1-\hat{Y}_t^2,\quad G_t=\hat{U}_t^1-\hat{U}_t^2,\quad H_t=F_t+\gamma^{-1}G_t,\quad e_t=\frac{H_t}{\abs{H_t}},\]
    and (recall \eqref{large_small_norms}, \eqref{Delta} and \eqref{key_metric})
    \[r_l(t)=r_l(F_t,G_t),\quad r_s(t)=r_s(F_t,G_t),\quad \Delta(t)=\Delta(F_t,G_t),\quad \rho(t)=\rho(F_t,G_t).\]

    Analogously to \eqref{ODI}, we acquire
    \begin{align*}
        \frac{\d}{\d t}\e[\rho(t)]\leq &-c_3 \e[\rho(t)]-\min\big\{\frac{\gamma\alpha}{8}f^{\prime}(R_1),\frac{1}{2}f^{\prime}(R_1)\epsilon_0 c_1 \sqrt{\kappa\gamma^{-2}}\big\}\e[\abs{F_t}]\\
         &+\frac{1}{2}\gamma^{-1}\Li f^{\prime}(0)\e[\abs{F_t}]+(\alpha+\frac{1}{2})\delta\\
         \leq& -c_3 \e[\rho(t)]+(\alpha+\frac{1}{2})\delta,
    \end{align*}
    where $c_3$ is given by \eqref{c_3}.
    By Gronwall's inequality, we have 
    \[\e[\rho(t)]\leq \e[\rho(0)]\exp(-c_3 t)+C\delta,\]
    and thus
    \[\begin{aligned}
    \Wcal_1(\Lcal(Y_t^1,U_t^1),\Lcal(Y_t^2,U_t^2))&=\Wcal_1(\Lcal(\hat{Y}_t^1,\hat{U}_t^1),\Lcal(\hat{Y}_t^2,\hat{U}_t^2))\\
    &\leq \frac{1}{\mathbf{C}_1}\e[\rho(t)]\\
    &\leq C\exp(-c_3 t)\Wcal_1(\Lcal(Y_0^1,U_0^1),\Lcal(Y_0^2,U_0^2))+C\delta,
    \end{aligned}\]
    where $C>0$ represents a constant independent of $t$ and $\delta$.
    Let $\delta\to 0$, and we finish the proof of Proposition \ref{MarkovContraction_prop}.
    
\section{Numerical experiments}\label{experiment}
    We consider the following example of \eqref{sde1} on $\Real^1\times\Real^1$:
    \begin{equation}\label{sde_numerical}
    \left
    \{\begin{aligned}
        \d X_t&=V_t\,\d t,\\
        \d V_t&=\big(-2X_t+k\e[X_t]-3V_t\big)\,\d t+\sqrt{6}\,\d B_t,
    \end{aligned}
    \right.
    \end{equation}
    with $\be(x)=-2x=- \nabla x^2$, $\bi(x,x^{\prime})=kx^{\prime}$, and $\gamma=3$. Here $k\geq0$ is a constant. By symmetry, the invariant measure $\mu^*$ of \eqref{sde_numerical} is identical to that of the classical Langevin dynamics
    \begin{equation}\label{markov_numerical}
        \left
    \{\begin{aligned}
        \d Y_t&=U_t\,\d t,\\
        \d U_t&=\big(-2Y_t-3U_t\big)\,\d t+\sqrt{6}\,\d B_t,
    \end{aligned}
    \right.
    \end{equation}   
    where we acquire, by solving a Fokker--Planck equation, (cf. \cite{pavliotis2014diffusion})
    \[\mu^*(\d x\d v)=\frac{1}{\sqrt{2}\pi}\exp(-x^2-\frac{1}{2}v^2)\,\d x\d v.\]

    In our experiments, we set the step length $\Delta t=1$, and consider the weighted path-dependent dynamics $(\hat{Z}_t,\hat{W}_t)$ governed by the following equation for all $j\in\ZZ_{\geq 0},\, t\in(j,j+1]$:
    \begin{equation}\label{path_numerical}
        \left
    \{\begin{aligned}
        \d \hat{Z}_t&=\hat{W}_t\,\d t,\\
        \d \hat{W}_t&=\big(-2\hat{Z}_t-3\hat{W}_t+k m_j\big)\,\d t+\sqrt{6}\,\d B_t.
    \end{aligned}
    \right.
    \end{equation} 
    Here the self-interacting term is characterized by the mean value $m_j=\frac{1}{j+1}\sum\limits_{i=0}^{j} \hat{Z}_i$, updated by
    \[m_{j+1}=\frac{j+1}{j+2}m_j+\frac{1}{j+2}\hat{Z}_{j+1}\]
    to save storage space.
    We notice that on each integer time interval $(j,j+1]$, the solution $(\hat{Z}_t,\hat{W}_t)$ satisfies
    \begin{equation}
        \left
    \{\begin{aligned}
        2\hat{Z}_t+\hat{W}_t&=e^{-(t-j)}(2\hat{Z}_j+\hat{W}_j)+km_j\int_j^t e^{-(t-s)}\d s+\sqrt{6}\int_j^t e^{-(t-s)}\d B_s,\\
        \hat{Z}_t+\hat{W}_t&=e^{-2(t-j)}(\hat{Z}_j+\hat{W}_j)+km_j\int_j^t e^{-2(t-s)}\d s+\sqrt{6}\int_j^t e^{-2(t-s)}\d B_s.
    \end{aligned}
    \right.
    \end{equation}
    Thus we obtain the recursive formula  
    \begin{equation}\label{recursive}
        \left
    \{\begin{aligned}
        \hat{Z}_{j+1}&=(2e^{-1}-e^{-2})\hat{Z}_j+(e^{-1}-e^{-2})\hat{W}_j\\
        &\quad+km_j
        (\frac{1}{2}-e^{-1}+\frac{1}{2}e^{-2})+\big(\sqrt{3}\sqrt{1-e^{-2}}-\frac{\sqrt{6}}{2}\sqrt{1-e^{-4}}\big)\xi_j,\\
        \hat{W}_{j+1}&=(-2e^{-1}+2e^{-2})\hat{Z}_j+(-e^{-1}+2e^{-2})\hat{W}_j\\
        &\quad+km_j(e^{-1}-e^{-2})+\big(-\sqrt{3}\sqrt{1-e^{-2}}+\sqrt{6}\sqrt{1-e^{-4}}\big)\xi_j,
    \end{aligned}
    \right.
    \end{equation}
    which enables us to compute $(\hat{Z}_t,\hat{W}_t)$ and compare $\Ecal_t(\hat{Z},\hat{W})$ with $\Ecal_t(Y,U)$. The error is characterized by the value of $\abs{m_j}$. After $8\times10^5$ steps of iteration on $50$ independent sample paths for cases $k=0.4,\,0.8,\,1.2,\,1.6,\,2.0$, we acquire the following graph.
    \begin{figure}[H]
		\centering
		\includegraphics[width=0.85\textwidth]{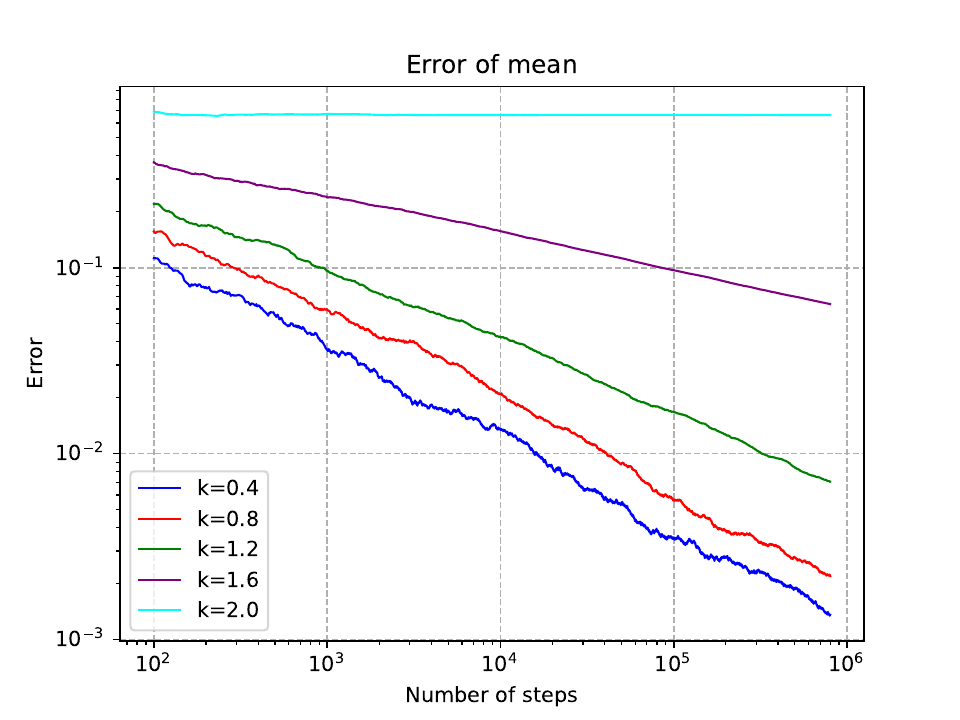}
		\caption{Error of empirical measures of the self-interacting dynamics $(\hat{Z}_t,\hat{W}_t)$.}
    \end{figure}

    Note that none of the above cases satisfy \eqref{ineq02} or \eqref{ineq_special}, which requires $k\leq \frac{\sqrt{2}}{24}$. Judging from the figure, however, empirical measures of $(\hat{Z}_t,\hat{W}_t)$ in all $k\leq 1.6$ scenarios show convergence behavior. The convergence rate decreases as $k$ increases, and divergence occurs at $k=2.0$. These phenomena demonstrate the effectiveness of our algorithm as well as suggest that the theoretical results are by no means optimal, where the weak interaction conditions \eqref{ineq02} and \eqref{ineq_special} can still be relaxed, and Theorem \ref{SelfInteractingConvergence} may be extended to weighted empirical measures as in \cite{du2023empirical}.
\bibliographystyle{plain}
\bibliography{myref.bib}
\end{document}